\newcounter{notes}
\newcommand{\ignore}[1]{}
\newtheorem{theorem}{Theorem}
\newtheorem{proposition}[theorem]{Proposition}
\newtheorem{corollary}[theorem]{Corollary}
\newtheorem{lemma}[theorem]{Lemma}
\newtheorem{observation}[theorem]{Observation}
\theoremstyle{definition}
\newtheorem{definition}[theorem]{Definition}
\newtheorem{remark}[theorem]{Remark}
\newtheoremstyle{theoremwithref}{}{}{\itshape}{}{\bfseries}{.}{.5em}{#1 #2 #3}
\theoremstyle{theoremwithref}
\newcommand{\R}{\mathbb{R}}
\newcommand{\Q}{\mathbb{Q}}
\newcommand{\Z}{\mathbb{Z}}
\newcommand{\N}{\mathbb{N}}
\newcommand{\PSL}{\mathrm{PSL}(2,\mathbb{R})}
\newcommand{\Diff}{\mathrm{Diff}}
\newcommand{\Aff}{\mathrm{Aff}}
\newcommand{\HOS}{\mathrm{Homeo}_+(S^1)}
\newcommand{\HOZ}{\mathrm{Homeo}^\Z_+(\R)}
\newcommand{\id}{\mathrm{id}}
\newcommand{\Fix}{\mathrm{Fix}}
\newcommand{\Supp}{\mathrm{Supp}}
\DeclareMathOperator{\Homeo}{Homeo}
\title{Reconstructing maps out of groups}
\author{Kathryn Mann}
\address{Department of Mathematics, Cornell University, Ithaca, NY 14853, USA
}
\email{k.mann@cornell.edu}
\author{Maxime Wolff}
\address{Sorbonne Universit\'es, UPMC Univ.\ Paris 06, Institut de Math\'ematiques
de Jussieu-Paris Rive Gauche, UMR 7586, CNRS, Univ. Paris Diderot, Sorbonne
Paris Cit\'e, 75005 Paris, France}
\email{maxime.wolff@imj-prg.fr}
\begin{document}

\maketitle
\numberwithin{theorem}{section}

\begin{abstract}
We show that, in many situations, a homeomorphism $f$ of a manifold $M$ may
be recovered from the (marked) isomorphism class of a finitely generated
group of homeomorphisms containing $f$.
As an application, we relate the notions of {\em critical regularity} and
of {\em differentiable rigidity}, give examples of groups of diffeomorphisms
of 1-manifolds with strong differential rigidity, and in so doing
give an independent, short proof of a recent result of Kim and Koberda that
there exist finitely generated groups of $C^\alpha$ diffeomorphisms of a
1-manifold $M$, not embeddable into $\Diff^\beta(M)$ for any $\beta > \alpha > 1$.
\vspace{0.2cm}

\end{abstract}

\section{Introduction}

\subsection{Motivation}
It is a classical and fundamental problem to describe to what extent the
algebraic structure of a group determines the topological spaces
on which the group can act, or constrains the regularity of those actions.
For example, Whittaker \cite{Whittaker} showed that closed topological
manifolds can be completely recovered from the algebraic structure of their groups of
homeomorphisms: an isomorphism between $\Homeo(M)$ and $\Homeo(N)$ implies
that $M = N$ and the isomorphism is an inner automorphism.
This was generalized by Rubin to homeomorphism groups of other topological
spaces, and Filipkiewicz~\cite{Filipkiewicz} improved this to the groups of $C^r$
diffeomorphisms of manifolds, showing that the algebraic structure
of $\Diff^r(M)$ can even detect the regularity $r$.

All of these could be considered {\em recognition} or {\em reconstruction} theorems, 
showing that spaces can be recognized by their transformation groups.
A different approach to the classical problem is to relate the complexity of a
topological space to the algebraic complexity of (finitely generated)
subgroups of its homeomorphism or diffeomorphism groups.
This is, in some sense the ``generalized Zimmer program,'' Zimmer's
conjecture being that groups of high algebraic complexity, namely lattices
of higher rank, cannot act by smooth or volume-preserving diffeomorphisms
on low-dimensional manifolds.

This broad line of investigation has been particularly successful in dimension one.
Here we know several purely algebraic conditions that prevent
finitely generated groups from acting on one-manifolds with a given regularity.
In the $C^0$ setting, this is the presence of left- or circular-orderability.
In class $C^1$,
many obstructions come from the {\em Thurston stability theorem}, while 
in higher regularity this program can be traced back all the way to
Denjoy's work on rotations of the circle.  
To give some more recent examples, Navas~\cite{NavasT} showed that Kazhdan's
property~$T$ is an algebraic obstruction to acting on the circle with
$C^{1+\alpha}$ regularity for $\alpha > 1/2$;
Castro--Jorquera--Navas~\cite{CJN} gave examples of nilpotent groups with
sharp bounds on the H\"older regularity of their actions on the closed interval $I$; and
more recently, Kim and Koberda~\cite{KimKoberda} gave examples of finitely
generated subgroups of ``critical regularity $\alpha$,'' embeddable in
$\Diff^\alpha(M)$ but not in $\Diff^\beta(M)$ for any
$\beta>\alpha$ when $M=S^1$ or~$I$.

\subsection{Results}\label{ssec:Results}
Our aim here is to contribute both to the general program of recognition and reconstruction,
and to the problem of restricting regularity, with a specific
application to the one-dimensional case.

We give general criteria for a group $\Gamma \subset \Homeo(X)$ of
homeomorphisms of a space $X$ to ``reconstruct'' or ``recognize'' other
homeomorphisms of $X$ purely through algebraic relations (Theorem \ref{theo:RecoGene}). 
We also construct groups acting on 1-manifolds with a strong
{\em differentiable rigidity} property (Theorem~\ref{thm:GammaRigid} and following),
by using  recent work of
Bonatti--Monteverde--Navas--Rivas~\cite{BMNR}
and a precise version of the Sternberg linearization theorem.
Building on all this, we deduce the existence of groups with {\em critical
regularity} (Theorem~\ref{coro:CritReg}).
This gives an alternative short proof (and some generalization) of the
critical regularity result of Kim and Koberda mentioned above.
However, their techniques go further in a different direction than ours:
they also give groups whose critical regularity passes to finite index
subgroups, simple groups of given regularity, and define dynamical notions
``$\delta$-fast'' and ``$\lambda$-expansive'' that are useful for explicitly
constructing groups of specified regularity.

The remainder of this introductory section is devoted to giving precise
statements of our results.

\subsection*{First result: map recognition} 
\begin{definition}
  Let $X$ be a topological space, let $\Gamma\subset\Homeo(X)$,
  and let $\mathcal{C}$ be any subset of $\Homeo(X)$.
  We will say that $\Gamma$ {\em recognizes maps in $\mathcal{C}$}
  if for any $f\in\mathcal{C}$, and for any
  $h\in\Homeo(X)$, the existence of a group isomorphism
  \[ \phi\colon\langle\Gamma,f\rangle\to\langle\Gamma,h\rangle \]
  with $\phi_{|\Gamma}=\id_\Gamma$ and $\phi(f)=h$ implies that
  $h=f$.
\end{definition}
Note that if $\Gamma$ recognizes maps in $\mathcal{C}$
and $\Gamma \subset \Gamma'$, then $\Gamma'$ recognizes
maps in $\mathcal{C}$ as well.  Furthermore, the property of recognizing individual maps
in $\mathcal{C}$ is equivalent to the property of recognizing any {\em subset}
of $\mathcal{C}$.

The following theorem, proved in Section \ref{sec:MapRec}, shows that examples
of such groups abound.  We introduce some terminology needed for the statement.
Recall that, for a group $\Gamma \subset \Homeo(X)$ and $\gamma \in \Gamma$,
the {\em support} of $\gamma$ is the closure of the set
$\lbrace x\in X\mid f(x)\neq x \rbrace$.
{\em Non total support} means $\Supp(\gamma) \neq X$.
We say that $\Gamma$ has {\em small supports everywhere} if, for every nonempty
open set $U\subset X$, there exists $\gamma\in\Gamma \smallsetminus \{\id\}$
with $\Supp(\gamma) \subset U$, and that $\Gamma$ {\em has the contraction property}
if, for any nonempty open set $U\subset X$, there exists $\gamma\in\Gamma$
such that $\gamma(X\smallsetminus U)\subset U$.

\begin{theorem}[Map recognition]\label{theo:RecoGene}
  Let $X$ be a Hausdorff topological space, and $\Gamma\subset\Homeo(X)$.
  \begin{enumerate}
  \item If $\Gamma$ has maps with {\em small supports everywhere}, then
    $\Gamma$ recognizes maps in $\Homeo(X)$.
  \item If $\Gamma$ acts on $X$ with the {\em contraction property}, then
    $\Gamma$ recognizes homeomorphisms of $X$ with non total support.
  \end{enumerate}
\end{theorem}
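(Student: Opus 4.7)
My plan for both parts is a proof by contradiction. Assume $f\ne h$, pick $x_0\in X$ with $f(x_0)\ne h(x_0)$, use Hausdorffness to separate the images by disjoint open sets $U\ni f(x_0)$ and $V\ni h(x_0)$, and shrink to an open neighbourhood $W$ of $x_0$ with $f(W)\subset U$ and $h(W)\subset V$. The strategy is then to produce two elements of $\langle\Gamma,f\rangle$ whose supports are arranged to be disjoint (so that their commutator equals $\id$ in $\langle\Gamma,f\rangle$), and to show that the $\phi$-image of this commutator fails to be the identity as a homeomorphism in $\langle\Gamma,h\rangle$.

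For part (1), small supports provides a nontrivial $\gamma_1\in\Gamma$ with $\Supp(\gamma_1)\subset W$, so $\Supp(f\gamma_1 f^{-1})\subset U$ and the conjugate $g:=h\gamma_1 h^{-1}$ has support in $V$. Since $g$ is a nontrivial homeomorphism, some $y\in V$ satisfies $g(y)\ne y$; Hausdorffness and continuity give an open $V'\subset V$ around $y$ with $g(V')\cap V'=\emptyset$. Small supports again yields a nontrivial $\gamma_2\in\Gamma$ with $\Supp(\gamma_2)\subset V'$. Since $U\cap V=\emptyset$, the supports of $\gamma_2$ and $f\gamma_1 f^{-1}$ are disjoint, so $[\gamma_2,f\gamma_1 f^{-1}]=\id$ as a homeomorphism and hence in $\langle\Gamma,f\rangle$. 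Applying $\phi$ gives $[\gamma_2,g]=\id$ in $\langle\Gamma,h\rangle$, so $g\gamma_2 g^{-1}=\gamma_2$; but then $\Supp(\gamma_2)=g(\Supp(\gamma_2))\subset g(V')$, which is disjoint from $V'\supset\Supp(\gamma_2)$, forcing $\gamma_2=\id$, a contradiction.

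For part (2), the same architecture applies, with the small-support elements of $\Gamma$ replaced by contraction-built conjugates of $f$: for any open $U$ contained in $W_0:=X\setminus\Supp(f)$, contraction gives $\alpha\in\Gamma$ with $\alpha(X\setminus U)\subset U$, so that $\alpha f\alpha^{-1}\in\langle\Gamma,f\rangle$ has $\Supp(\alpha f\alpha^{-1})=\alpha(\Supp f)\subset U$. I would split into two cases according to whether the disagreement set $D:=\{x:f(x)\ne h(x)\}$ meets $W_0$. When $D\cap W_0\ne\emptyset$, pick $x_0\in D\cap W_0$ (so $f(x_0)=x_0\ne h(x_0)$) and run the part~(1) argument with such contraction-conjugates of $f$ playing the roles of $\gamma_1,\gamma_2$. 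When $D\cap W_0=\emptyset$ one already has $h|_{W_0}=\id$, but a separate argument is needed to rule out disagreement inside $\Supp(f)$.

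The main obstacle, where the bulk of the technical work lies, is that unlike elements of $\Gamma$, these contraction-built conjugates are not fixed by $\phi$: the element $\alpha f\alpha^{-1}$ maps to $\alpha h\alpha^{-1}$ whose support $\alpha(\Supp h)$ is a priori uncontrolled. The disjoint-support commutator identity in $\langle\Gamma,f\rangle$ therefore translates to a commutator identity in $\langle\Gamma,h\rangle$ whose falsity as a homeomorphism identity is no longer automatic. Resolving this requires choosing the contraction elements carefully enough that the resulting word identity, combined with $h(x_0)\ne x_0$, again forces certain supports to be simultaneously equal (by $\phi$) and disjoint (by the geometry of contraction), mirroring the part~(1) contradiction. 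Handling the second case --- where the disagreement lies entirely inside $\Supp(f)$ --- appears to require a further layer, likely using contraction to produce a word in $\langle\Gamma,f\rangle$ that algebraically witnesses points of $\Supp(f)$.
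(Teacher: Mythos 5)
Your part (1) is correct and complete: the chain $\gamma_1$ supported in $W$, the conjugates $f\gamma_1 f^{-1}$ and $g=h\gamma_1 h^{-1}$ landing in the disjoint sets $U$ and $V$, the second probe $\gamma_2$ supported in $V'$ with $g(V')\cap V'=\emptyset$, and the transported commutation forcing $g(\Supp(\gamma_2))=\Supp(\gamma_2)\subset V'\cap g(V')=\emptyset$, is airtight. It is organized a bit differently from the paper (which first proves a lemma that $\Supp(f)=\Supp(h)$ and then shows $h(x)\in\{x,f(x)\}$ pointwise), and your single direct contradiction is a perfectly good, arguably shorter, route for that case.

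Part (2), however, is not a proof: it stops exactly where the real work begins, and the obstacle you name (that $\phi(\alpha f\alpha^{-1})=\alpha h\alpha^{-1}$ has a priori uncontrolled support $\alpha(\Supp(h))$) is left unresolved, as is the entire case where the disagreement set lies inside $\Supp(f)$. The missing idea is to probe $f$ against its \emph{own} $\Gamma$-conjugates rather than against elements meant to play the role of $\gamma_1,\gamma_2$. Concretely, one first proves $\Supp(f)=\Supp(h)$: if $f(x)\neq x$, choose (using a point $z$ with $\{x,f(x),z,f(z)\}$ of cardinality $4$) a neighborhood $U_x$ of $x$ with $U_x\cap f(U_x)=\emptyset$ and $\{z,f(z)\}\cap U_x=\emptyset$; then for any $\gamma\in\Gamma$ with $\gamma(X\smallsetminus U_x)\subset U_x$, a direct computation at the point $\gamma z$ shows $f$ does not commute with $\gamma f\gamma^{-1}$, whereas if $x\notin\Supp(f)$ and $U_x$ avoids $\Supp(f)$, they do commute. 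These are relations involving only $f$ and elements of $\Gamma$, so $\phi$ transports them verbatim to $h$ and $\gamma h\gamma^{-1}$, yielding $\Supp(f)=\Supp(h)$. This equality is precisely what removes your stated obstacle, since now the supports of all $\Gamma$-conjugates of $h$ are controlled just like those of $f$. Your remaining open case is then handled as in the paper: given $h(x)\notin\{x,f(x)\}$, pick $y\notin\Supp(f)$ away from $x,f(x),h(x)$, take $U_y$ disjoint from $\Supp(f)$ and contractions $\gamma_y,\gamma_x$, so that $f_3=\gamma_x^{-1}\gamma_y^{-1}f\gamma_y\gamma_x$ is supported in $U_x$ and, by the support equality, so is $h_3$; comparing the supports of $f_4=[f^{-1},f_3]$ (inside $U_x\cup f(U_x)$) and $h_4=[h^{-1},h_3]$ (meeting $h(U_x)$) and applying the support equality again to the restricted isomorphism gives the contradiction. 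Without some substitute for the $\Supp(f)=\Supp(h)$ step, your ``carefully chosen contraction elements'' cannot get off the ground, so as written part (2) has a genuine gap.
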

Similar conditions have been used elsewhere in the literature.
The reconstruction theorems of Whittaker, Epstein, and
Rubin~\cite{Epstein,Whittaker,Rubin} all use variations on the idea of small
supports.
To our knowledge, the contraction property was first used (under the more cumbersome name
of ``minimality and strong expansivity'') in the proof by Margulis of the
Tits' alternative in $\Homeo_+(S^1)$; see~\cite{MargulisTits,GhysCircle}.

We also show that Baumslag-Solitar groups give additional examples of groups
with map recognition.
These are needed for our applications and do not fall
in the domain of Theorem~\ref{theo:RecoGene}.
\begin{theorem}\label{theo:BSReco}
  The affine Baumslag-Solitar subgroup $BS(1,n)\subset\Homeo(\R)$
  recognizes maps with compact support.
\end{theorem}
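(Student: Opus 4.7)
The hypotheses of Theorem~\ref{theo:RecoGene} both fail for $\Gamma = BS(1,n) \subset \Homeo(\R)$: every nontrivial $\gamma \in BS(1,n)$ acts as $x \mapsto n^k x + t$, so has support either empty or all of $\R$, and no such $\gamma$ can map the complement of a bounded open set inside a bounded set. A tailored argument is therefore required. The plan is to exploit the dilation $a$ and the translations $b^j$ to build small-support elements inside the extended group $\langle BS(1,n), f\rangle$, and then to mimic the strategy used for Theorem~\ref{theo:RecoGene}(1).

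For any compactly supported $f$ and any $m \geq 0$, $j \in \Z$, set
\[
\sigma_{m,j} := a^{-m} b^{-j}\, f\, b^j a^m \;\in\; \langle BS(1,n), f\rangle.
\]
A direct computation gives $\Supp(\sigma_{m,j}) = (\Supp(f) - j)/n^m$, an interval of length $|\Supp(f)|/n^m$ centered near the $n$-adic rational $-j/n^m$. Varying $m$ and $j$, these supports can be placed inside any prescribed nonempty open subset of $\R$ with arbitrarily small diameter, so the extended group $\langle BS(1,n), f\rangle$ does have small supports everywhere, even though $BS(1,n)$ does not. Since $\phi$ fixes $BS(1,n)$ pointwise and sends $f$ to $h$, it sends $\sigma_{m,j}$ to $\tau_{m,j} := a^{-m} b^{-j} h b^j a^m$, and every algebraic relation satisfied by the $\sigma_{m,j}$ is inherited by the $\tau_{m,j}$. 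Assuming for contradiction that $f(x_0) \neq h(x_0)$ for some $x_0$, choose disjoint open neighborhoods $V \ni f(x_0)$ and $W \ni h(x_0)$, an open $U \ni x_0$ with $f(U) \subset V$ and $h(U) \subset W$, and select $\sigma_{m_1, j_1}$ supported in $V$ and $\sigma_{m_2, j_2}$ supported in $U$. The goal is to exhibit a word $w(a, b, f)$ that equals the identity in $\langle BS(1,n), f\rangle$ but whose image $w(a, b, h)$ is nontrivial, contradicting the existence of $\phi$; a natural candidate is a commutator such as $[\sigma_{m_1, j_1},\, f\sigma_{m_2, j_2}f^{-1}]$, whose two factors both have support inside $V$ and can be arranged not to commute, while its $\phi$-image has the second factor supported near $h(U) \subset W$, forcing triviality.

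The hard part, absent from Theorem~\ref{theo:RecoGene}(1), is that the small-support elements $\sigma_{m,j}$ are not fixed by $\phi$: they are conjugates of $f$, sent to the corresponding conjugates of $h$, so the support combinatorics on the $h$-side must be recovered from algebra rather than taken for granted. To handle this I plan to use the commutation relations $[\sigma_{m_1, j_1}, \sigma_{m_2, j_2}] = \id$ that hold whenever the corresponding supports are disjoint: these transfer through $\phi$ to a combinatorially rich family of relations $[\tau_{m_1, j_1}, \tau_{m_2, j_2}] = \id$, constraining both the location and the scale of $\Supp(h)$. Together with the automatic $BS(1,n)$-conjugacy structure among the $\sigma_{m,j}$, this should force first that $h$ has compact support and then that $\Supp(h)$ is placed in the same way as $\Supp(f)$, at which point the local commutator argument above closes out with $h = f$. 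The density of the $BS(1,n)$-orbit $\Z[1/n] \subset \R$ is decisive throughout, as it is what guarantees that the combinatorial information carried by the $\sigma$'s is fine enough to pin down both the compactness and the location of $\Supp(h)$.
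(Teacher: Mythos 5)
Your overall architecture is the same as the paper's: work with the $BS(1,n)$-conjugates of $f$ (which do have arbitrarily small supports placed densely, as you compute), transfer their commutation relations through $\phi$, try to deduce where $\Supp(h)$ lives, and finish with a local commutator contradiction. But the decisive step is exactly the one you leave as ``this should force first that $h$ has compact support and then that $\Supp(h)$ is placed in the same way as $\Supp(f)$,'' and that step is not routine --- it is the technical heart of the paper's argument. The relations you can transfer are of the form $[h,uhu^{-1}]=1$ for those $u\in BS(1,n)$ moving $\Supp(f)$ off itself (plus the analogous relations for commutators $[h,\gamma]$ with $\gamma\in BS(1,n)$). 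The paper proves (Proposition~\ref{prop:ToolBS}) that these relations force $h$ to be either a translation or compactly supported, and the proof requires real dynamics: Lemma~\ref{lem:PtFixe} (a conjugacy between distinct commuting fixed-point-free maps must have fixed points), an analysis of the germs of $h$ at $\pm\infty$ via conjugation by $a^{-k}b^{-N}$ (Lemma~\ref{lem:Obs1}), a density-of-fixed-points argument using two displacements rationally independent over $\Q$ (Lemma~\ref{lem:Obs2}), and Lemma~\ref{lem:Obs3} for commutators with translations. Note also that a nontrivial translation $h$ genuinely satisfies all the inherited relations, so the conclusion cannot be ``$\Supp(h)$ is compact'' without an argument excluding or handling that case (for an isomorphism one excludes it via injectivity, e.g.\ $[f,b]\neq 1$ while a translation commutes with $b$); your sketch never confronts this.

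Because of this gap, the closing step of your plan cannot be set up: to choose the neighborhoods, the translation $\tau$, and (in the paper's version) a dilatation with fixed point in $U$ and derivative large enough that $\gamma^{-1}(\Supp(h)\cup\Supp(f))\subset U$, you must already know that $\Supp(h)$ is compact and located where $\Supp(f)$ is --- precisely the information you were hoping the same step would produce. Your candidate word $[\sigma_{m_1,j_1},\,f\sigma_{m_2,j_2}f^{-1}]$ with ``factors that can be arranged not to commute'' is plausible in spirit, but whether its $\phi$-image is forced to be trivial depends on controlling $\Supp(\tau_{m_i,j_i})=$ the supports of the corresponding conjugates of $h$, which is again the unproved point. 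In short: correct identification of the difficulty and a reasonable outline, but the proof of the key support-rigidity statement (the analogue of Proposition~\ref{prop:ToolBS}, and then the localization argument of Proposition~\ref{prop:BSRecoMieux}) is missing, so the proposal does not yet constitute a proof.
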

Here $BS(1,n)$ denotes the group generated by the maps
$x\mapsto x+1$ and $x\mapsto nx$.
Theorem~\ref{theo:BSReco}
is proved in Section \ref{sec:BS}, where we actually prove something
stronger -- see Proposition~\ref{prop:BSRecoMieux}.
It would be interesting to find a simple and general condition that would
simultaneously imply both the statements of Theorem~\ref{theo:RecoGene}
and Theorem~\ref{theo:BSReco}.

\subsection*{Application: differential rigidity from critical regularity}
\begin{definition}[Differential rigidity and critical regularity]
  Let $M$ be a manifold and $\alpha\geqslant 1$.
  \begin{enumerate}
  \item A subgroup $\Gamma\subset\Diff^\infty(M)$ is said to be
    {\em $C^\alpha$-rigid} if for all $\beta\geqslant\alpha$, any faithful
    morphism $\Gamma\to\Diff^\beta(M)$ comes from conjugation by some
    element of $\Diff^\beta(M)$.
  \item A subgroup $\Gamma\subset\Diff^\alpha(M)$ is said to have
    {\em critical regularity $\alpha$} if for every $\beta>\alpha$
    there is no faithful morphism
    $\Gamma\to\Diff^\beta(M)$\footnote{In \cite{KimKoberda},
    ``critical regularity $\alpha$'' means something slightly more general:
    it denotes this property of a group,
    and also the property of being
    embeddable into $\Diff^\delta(M)$ for all $\delta < \alpha$ but not
    in $\Diff^\alpha$.}
  \end{enumerate}
\end{definition}
Here $\alpha$ and $\beta$ are assumed to take real values, with the
convention that a map $f\colon M\to M$ is of class $C^\alpha$ if it is
$C^{\lfloor\alpha\rfloor}$ and if it is $\lfloor\alpha\rfloor$-derivatives
are $(\alpha-\lfloor\alpha\rfloor)$-H\"older.
However, most of our work in the 1-dimensional case actually applies to
maps whose regularity is given by more general moduli of continuity.
We assume
H\"older regularity
here only for simplicity of the statement.
See Remark~\ref{rmk:Sternberg-Reg} below.

The following proposition illustrates that {\em critical
regularity follows from differentiable rigidity} in a general sense:
this is the guiding principle and original motivation of our work.
\begin{proposition}[Critical regularity from differential rigidity]\label{prop:DRCR}
  Let $M$ be a manifold and $\alpha\geqslant 1$.
  Let $\Gamma\subset\Diff^\infty(M)$ be $C^\alpha$-rigid, and suppose that
  for some nonempty open set $U$ (possibly equal to $M$),
  $\Gamma$ recognizes maps with support in~$U$. Then
  for any map $f\in\Homeo(M)$ with support in $U$, and any
  $\beta\geqslant\alpha$, the group $\langle\Gamma,f\rangle$ admits a
  faithful morphism to $\Diff^\beta(M)$ if and only if $f\in\Diff^\beta(M)$.
\end{proposition}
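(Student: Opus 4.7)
The proof plan is to combine $C^\alpha$-rigidity (which controls $\psi|_\Gamma$ for any candidate morphism $\psi$) with map recognition (which then forces $\psi(f) = f$), so the whole argument is really just splicing together the two hypotheses. The ``if'' direction is trivial, since if $f \in \Diff^\beta(M)$ then the inclusion $\langle \Gamma, f\rangle \hookrightarrow \Diff^\beta(M)$ is a faithful morphism.

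For the ``only if'' direction I would start with an arbitrary faithful morphism $\psi \colon \langle \Gamma, f\rangle \to \Diff^\beta(M)$ and consider its restriction $\psi|_\Gamma$. Since $\beta \geqslant \alpha$ and $\Gamma$ is $C^\alpha$-rigid, this restriction agrees with conjugation by some $g \in \Diff^\beta(M)$. Replacing $\psi$ by the faithful morphism $\psi' := g^{-1}\,\psi(\cdot)\,g$, I can assume from now on that $\psi'|_\Gamma = \id_\Gamma$. The property ``$f \in \Diff^\beta(M)$'' being invariant under conjugation by $\Diff^\beta(M)$, this reduction is harmless.

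Now set $h := \psi'(f) \in \Diff^\beta(M) \subset \Homeo(M)$. Because $\psi'$ is a faithful morphism that fixes $\Gamma$ pointwise and sends $f$ to $h$, it provides a group isomorphism $\langle \Gamma, f\rangle \to \langle \Gamma, h\rangle$ of exactly the type required by the definition of map recognition. Since $f$ has support in $U$ and $\Gamma$ recognizes maps with support in $U$, the definition forces $h = f$. Therefore $f = h \in \Diff^\beta(M)$, completing the proof.

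I do not expect any serious obstacle: the only thing to check carefully is that one is allowed to replace $\psi$ by $\psi'$, which is legitimate precisely because $\Diff^\beta(M)$-conjugation preserves membership in $\Diff^\beta(M)$, and that $\langle \Gamma, h\rangle$ is a \emph{bona fide} subgroup of $\Homeo(M)$ so that the recognition hypothesis applies, which it is since $h \in \Diff^\beta(M) \subset \Homeo(M)$.
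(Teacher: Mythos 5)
Your proposal is correct and follows essentially the same argument as the paper: use $C^\alpha$-rigidity to conjugate the faithful morphism so it restricts to the identity on $\Gamma$, then invoke map recognition (with $h=\psi'(f)$, noting the image is $\langle\Gamma,h\rangle$) to conclude $f=h\in\Diff^\beta(M)$. No gaps.
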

The proof is a quick consequence of the definitions, we give it at the
beginning of Section~\ref{sec:DRCR_cercle}.

\subsection*{Examples of groups with differential rigidity and critical
regularity}  
Proposition~\ref{prop:DRCR} motivates the construction of differentiably
rigid groups that have the map recognition property.
We will several examples, described below, when $\dim(M) =1$.
Combined with Proposition \ref{prop:DRCR} and variations on it, these
constructions give a short proof of the following
result, due to Kim and Koberda for $S^1$ and $[0,1]$.
\begin{theorem}[Compare Kim--Koberda~\cite{KimKoberda}]\label{coro:CritReg}
  For $M = S^1$, $\R$, or $[0,1]$, and for all $\alpha > 1$, there exist
  finitely generated subgroups of $\Diff^\infty(M)$ of critical regularity $C^\alpha$.
 \end{theorem}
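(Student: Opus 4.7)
The plan is to combine Proposition~\ref{prop:DRCR} with two ingredients: (i) a finitely generated $C^\alpha$-rigid subgroup $\Gamma \subset \Diff^\infty(M)$ that also recognizes maps supported in some nonempty open $U \subset M$; and (ii) a single element $f \in \Diff^\alpha(M)$ with support in $U$ whose regularity is \emph{exactly} $C^\alpha$, meaning $f \notin \Diff^\beta(M)$ for any $\beta > \alpha$. Granting these, set $\Gamma' := \langle \Gamma, f \rangle$; it is finitely generated and sits inside $\Diff^\alpha(M)$ since $\Gamma \subset \Diff^\infty(M) \subset \Diff^\alpha(M)$. By Proposition~\ref{prop:DRCR}, a faithful morphism $\Gamma' \to \Diff^\beta(M)$ exists if and only if $f \in \Diff^\beta(M)$, which fails for every $\beta > \alpha$ by the choice of $f$. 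Hence $\Gamma'$ has critical regularity $\alpha$.

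Ingredient (ii) is standard: one may perturb a $C^\infty$ bump diffeomorphism of $M$ supported in a small ball of $U$ by a cut-off term whose $\lfloor\alpha\rfloor$-th derivative has H\"older exponent exactly $\alpha - \lfloor\alpha\rfloor$ at a chosen point, the amplitude being small enough that the result remains a diffeomorphism and its support stays inside $U$. This produces an $f \in \Diff^\alpha(M) \smallsetminus \bigcup_{\beta > \alpha}\Diff^\beta(M)$ of the required form.

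The real work is ingredient (i). On $M = \R$, Theorem~\ref{theo:BSReco} already provides a smooth subgroup recognizing every compactly supported homeomorphism, namely $BS(1,n) \subset \Diff^\infty(\R)$ with $U = \R$; one then only needs to check the $C^\alpha$-rigidity of $BS(1,n)$ (or of some smooth extension of it). On $M = S^1$ or $[0,1]$, I would instead build a finitely generated smooth group that is simultaneously $C^\alpha$-rigid and satisfies one of the hypotheses of Theorem~\ref{theo:RecoGene} (small supports, or the contraction property on some proper open $U$). Producing such rigid groups is the central technical point of the paper, the content of Theorem~\ref{thm:GammaRigid}, and draws, as announced in the introduction, on the work of Bonatti--Monteverde--Navas--Rivas together with a precise form of Sternberg's linearization theorem. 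This is the main obstacle; once (i) is in hand, Proposition~\ref{prop:DRCR} packages the critical regularity statement almost immediately.
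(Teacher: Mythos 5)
Your reduction for $M=S^1$ is exactly the paper's: take the rigid group $\Gamma$ of Theorem~\ref{thm:GammaRigid} (which has the contraction property, hence small supports, hence recognizes all of $\Homeo(S^1)$ by Theorem~\ref{theo:RecoGene}), pick $f\in\Diff^\alpha(S^1)\smallsetminus\bigcup_{\beta>\alpha}\Diff^\beta(S^1)$ supported in $U$, and apply Proposition~\ref{prop:DRCR}. Your ingredient~(ii) is also fine and is left implicit in the paper. The gaps are in the other two cases, where Proposition~\ref{prop:DRCR} cannot be applied in the form you propose.

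For $M=\R$, it is not true that ``one only needs to check the $C^\alpha$-rigidity of $BS(1,n)$'': the affine $BS(1,n)$ (indeed any group containing it with its affine action, including $\Gamma_A$) admits faithful $C^\infty$ actions on $\R$ that are compactly supported --- conjugate the affine action into $(0,1)$ so as to be $C^\infty$-tangent to the identity at the endpoints and extend by the identity --- and such an action has global fixed points, so it is not topologically (let alone $C^\beta$) conjugate to the affine one. Hence no such group is $C^\alpha$-rigid on $\R$, and your route collapses at the key step. The paper avoids this by working with the group $\widetilde{\Gamma}$ of lifts of the circle group: centrality of the integer translation $z$ plus Thurston stability force $\phi(z)$ to be fixed-point free, which lets one descend to $S^1$ and invoke Theorem~\ref{thm:GammaRigid}; recognition then only holds up to integer translation (Proposition~\ref{prop:Rigid_lifts}), which is harmless since translations are smooth. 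For $M=[0,1]$ the situation is worse: the paper points out that \emph{no} group can be $C^\alpha$-rigid on the closed interval (make the action infinitely tangent to a linear model at $0$, double, and glue), so your plan to ``build a finitely generated smooth group that is simultaneously $C^\alpha$-rigid'' on $[0,1]$ is not merely hard, it is impossible. The paper's substitute is Theorem~\ref{thm:IntervalRigid}: for the specific group $\langle\Gamma_A,f\rangle$ one shows any faithful $C^{1+\alpha}$ action leaves some interval $(a,b)$ invariant and is conjugate there to the original action, which forces $f\in\Diff^{1+\alpha}$; this needs the homomorphism (not isomorphism) version of recognition for $BS(1,n)$ (Proposition~\ref{prop:BSRecoMieux}) together with the regularity-of-conjugacy statement (Proposition~\ref{prop:Takens}, which is where the extra irrational homothety $\mu$ in $\Gamma_A$ is essential --- $BS(1,n)$ alone would not suffice even there). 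So your outline works verbatim only on the circle; for $\R$ and $[0,1]$ a weakened, tailor-made form of rigidity and recognition is required, and that is a missing idea rather than a routine verification.
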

In fact, as we mentioned above, the statement we obtain
here is valid in much more generality than regularities $C^{1+\alpha}$ with
$\alpha\in\R_+$, but with more general moduli of continuity; we prove
that these finer regularities are detected
by the algebraic structures of the groups.
This implies, in particular, that there exist uncountably many
non-isomorphic
finitely generated groups in each class of critical regularity in
Theorem~\ref{coro:CritReg}, and responds to Question~7.1(1) of~\cite{KimKoberda}.
We do not seek to state Theorem~\ref{coro:CritReg} in maximal degree of
generality here -- see Remark~\ref{rmk:Sternberg-Reg} and the
discussion following.  

The groups we construct for the use
of Proposition~\ref{prop:DRCR} are actually quite easy to describe.
The simplest case is $M =S^1$.
Let $\Gamma_T\subset\PSL$ be a Fuchsian triangle group $(2,3,7)$, with presentation
\[ \Gamma_T = \langle s,r,t \mid  s^2=r^3=t^7=trs=1 \rangle, \]
choose an integer $n\geqslant 2$, consider a proper interval $I \subset S^1$,
and let $\Gamma_A$ be a copy of an affine subgroup containing $BS(1,n)$ and
an extra irrational homothety $x \mapsto \mu x$, acting smoothly on $S^1$
by a conjugate of the affine action on $I$, and by the identity on
$S^1\smallsetminus I$. Let $\Gamma$ denote the group generated by $\Gamma_T$ and $\Gamma_A$.
While there are many choices involved in this construction, we show all resulting groups are rigid: 

\begin{theorem}[Differential rigidity on $S^1$]\label{thm:GammaRigid}
  Any group $\Gamma$ obtained by the construction above is
  $C^{1+\alpha}$-rigid, for all $\alpha>0$.
\end{theorem}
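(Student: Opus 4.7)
Fix $\alpha>0$ and a faithful $\rho\colon\Gamma\to\Diff^{1+\alpha}(S^1)$; the goal is to produce $h\in\Diff^{1+\alpha}(S^1)$ with $h\rho(\cdot)h^{-1}$ equal to the inclusion $\Gamma\hookrightarrow\Diff^\infty(S^1)$. The plan is in two steps: first rigidify $\rho|_{\Gamma_T}$ by a $C^{1+\alpha}$ conjugacy, and then invoke Theorem~\ref{theo:RecoGene} to pin down $\rho|_{\Gamma_A}$ for free.

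\emph{Step 1 (rigidity on $\Gamma_T$).} The restriction $\rho|_{\Gamma_T}$ is a faithful $C^{1+\alpha}$ representation of the cocompact Fuchsian $(2,3,7)$ triangle group on $S^1$. Classical bounded Euler class arguments show that $\rho|_{\Gamma_T}$ is semi-conjugate to the standard Fuchsian action, and minimality of the latter promotes the semi-conjugacy to a topological conjugacy. To upgrade this to $C^{1+\alpha}$ regularity, we use the many hyperbolic elements of $\Gamma_T$: their images under $\rho$ are again $C^{1+\alpha}$ diffeomorphisms with hyperbolic fixed points (of the same multiplier, forced by the relations together with the existing topological conjugacy), and a precise version of the Sternberg linearization theorem provides $C^{1+\alpha}$ coordinates near each such fixed point in which the conjugacy becomes smooth. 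Spreading these linearizations by the minimal group dynamics on $S^1$ yields a global $h_0\in\Diff^{1+\alpha}(S^1)$ with $h_0\rho(\gamma)h_0^{-1}=\gamma$ for all $\gamma\in\Gamma_T$. After replacing $\rho$ by $h_0\rho(\cdot)h_0^{-1}$, we may assume $\rho|_{\Gamma_T}=\id_{\Gamma_T}$.

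\emph{Step 2 (recognition of $\Gamma_A$).} We claim that $\Gamma_T$ has the contraction property on $S^1$. Indeed, for any nonempty open $U\subset S^1$, the density in $S^1\times S^1$ of pairs of fixed points of hyperbolic elements of $\Gamma_T$ lets us select a hyperbolic $\gamma\in\Gamma_T$ with attracting fixed point inside $U$ and repelling fixed point outside $\overline{U}$; a sufficiently high power of $\gamma$ then sends $S^1\smallsetminus U$ into $U$. By Theorem~\ref{theo:RecoGene}(2), $\Gamma_T$ recognizes every homeomorphism of $S^1$ with non-total support. Now fix any $f\in\Gamma_A$; since $\Supp(f)\subset I\subsetneq S^1$, $f$ has non-total support, and faithfulness of $\rho$ yields a group isomorphism
\[ \rho|_{\langle\Gamma_T,f\rangle}\colon\langle\Gamma_T,f\rangle\longrightarrow\langle\Gamma_T,\rho(f)\rangle \]
that fixes $\Gamma_T$ pointwise and sends $f$ to $\rho(f)$. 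Map recognition then forces $\rho(f)=f$. Applying this to all $f\in\Gamma_A$, and using that $\Gamma$ is generated by $\Gamma_T\cup\Gamma_A$, we conclude $\rho=\id_\Gamma$, as desired.

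\emph{Main obstacle.} The substantive content is concentrated in Step~1: while the topological part of Fuchsian rigidity for the triangle group $\Gamma_T$ is classical, the upgrade from a topological conjugacy to one in $\Diff^{1+\alpha}(S^1)$ is where the work lies and where the hypothesis $\alpha>0$ is essential, through the application of Sternberg's theorem at hyperbolic fixed points. Step~2, by contrast, is an essentially formal appeal to the recognition machinery already established, requiring no new dynamical input beyond the contraction property of the Fuchsian action.
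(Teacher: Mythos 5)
There is a genuine gap, and it sits exactly where you locate the ``substantive content'': Step~1 does not work, and in fact the claim it makes (that $\rho|_{\Gamma_T}$ alone can be rigidified to a $C^{1+\alpha}$ conjugacy) is precisely what the paper's construction is designed to avoid having to prove. Two problems. First, minimality: Matsumoto's theorem gives a semi-conjugacy from $\rho(\Gamma_T)$ to the standard action, and what promotes it to a conjugacy is minimality of $\rho(\Gamma_T)$, not of the standard action --- a Denjoy-type blow-up of the Fuchsian action has maximal Euler class and is semi-conjugate but not conjugate to it, and at $C^{1+\alpha}$ regularity no cited result (Ghys's rigidity needs $r\geqslant 3$) rules this out for $\Gamma_T$ by itself. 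In the paper, ruling out an exceptional minimal set is the bulk of the proof and crucially uses the interaction between $\Gamma_T$ and the affine piece $\Gamma_A$, via Corollary~\ref{cor:BMNR} and a commutation/support argument; it is not a property of the triangle group alone. Second, the multiplier claim: there is no algebraic relation in $\Gamma_T$ forcing a hyperbolic element's derivative at its fixed point to be preserved under an arbitrary faithful $C^{1+\alpha}$ representation, and a topological conjugacy carries no derivative information. This is exactly the point of Theorem~\ref{thm:BMNR}: the Baumslag--Solitar relation \emph{does} force $\phi(a)'=\pm n$ at the fixed point, whereas nothing analogous holds for hyperbolic elements of a Fuchsian group. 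Moreover, even if multipliers matched, Sternberg linearization of individual elements does not make the conjugacy smooth: after linearizing both germs, the conjugacy is only known to commute with a single dilatation $x\mapsto\lambda x$, and such homeomorphisms can be wildly non-differentiable. The paper's mechanism (Proposition~\ref{prop:Takens}) needs the conjugacy to commute with a \emph{dense} group of dilatations in the linearizing chart --- this is the role of the extra irrational homothety $\mu$ in $\Gamma_A$ --- to force it to be linear there, and then minimality of the full $\Gamma$ spreads $C^{1+\alpha}$ regularity over $S^1$.

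Your Step~2 (contraction property of $\Gamma_T$, hence recognition of $a$, $b$, $\mu$ via Theorem~\ref{theo:RecoGene}(2), hence $\rho$ is the conjugation) does match the paper's endgame, but in the paper it is applied to a merely \emph{topological} conjugacy, with the regularity supplied afterwards by Proposition~\ref{prop:Takens} on the interval $I$ and by minimality elsewhere. So the correct order of quantifiers is: minimality of $\phi(\Gamma_T)$ (proved using $\Gamma_A$), then topological conjugacy and recognition, then regularity of the conjugating map from the $BS(1,n)$-plus-$\mu$ dynamics --- not regularity of the $\Gamma_T$-conjugacy first.
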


For other $1$-manifolds, the situation is more delicate.
For example, no groups can act in a differentiably rigid way on the closed
interval $I = [0,1]$, as one can always conjugate an action to make it
infinitely tangent to a linear action at $0$,
``double'' the interval at $0$ and glue two copies of the action side by side.
However, our strategy can be adapted to prove critical regularity using
weaker forms of map recognition for group actions on $\R$ and $I$.
On the line, one can arrive at this by lifting maps from $S^1$.
We show:
\begin{proposition} \label{prop:Rigid_lifts}
  Let $\widetilde{\Gamma}$ be the group of lifts to $\Diff(\R)$ of
  elements of a group $\Gamma\subset\Diff^\infty(S^1)$ defined above.
  Then $\widetilde{\Gamma}$ is $C^{1+\alpha}$-rigid, for all $\alpha>0$.
  Moreover, if $\HOZ$ denotes the set of homeomorphisms of $\R$
  which commute with integer translations, then $\widetilde{\Gamma}$
  recognizes maps of $\HOZ$ up to integer translations.
\end{proposition}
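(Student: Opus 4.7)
Both claims rest on the observation that $\widetilde\Gamma$ is a central extension $1\to\langle T_1\rangle\to\widetilde\Gamma\to\Gamma\to 1$ by the integer translations, and that $\langle T_1\rangle$ is exactly the center of $\widetilde\Gamma$: any central element projects to the centralizer of $\Gamma$ in $\Homeo_+(S^1)$, which is trivial since $\Gamma\supset\Gamma_T$ contains two hyperbolic elements with disjoint fixed-point sets.

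For the $C^{1+\alpha}$-rigidity claim, let $\psi\colon\widetilde\Gamma\to\Diff^\beta(\R)$ be faithful with $\beta\geq 1+\alpha$. Then $\psi(T_1)$ is central in $\psi(\widetilde\Gamma)$. The main step is to show that $\psi(T_1)$ acts freely on $\R$. Otherwise, $F:=\Fix(\psi(T_1))$ is a nonempty $\psi(\widetilde\Gamma)$-invariant subset of $\R$, on which $\widetilde\Gamma$ acts through $\Gamma$. In $\widetilde\Gamma_T$, the torsion lifts $\widetilde s,\widetilde r,\widetilde t$ satisfy relations $\widetilde s^{2}=T_1^{a}$, $\widetilde r^{3}=T_1^{b}$, $\widetilde t^{7}=T_1^{c}$ for nonzero integers $a,b,c$ determined by the Euler class. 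Since $T_1$ acts trivially on $F$, each restriction $\psi(\widetilde s)|_F, \psi(\widetilde r)|_F, \psi(\widetilde t)|_F$ is an order-preserving map of finite order on a totally ordered set, hence the identity. Thus $\widetilde\Gamma_T$ fixes $F$ pointwise. I expect the main obstacle to be closing this contradiction: one must argue that a faithful $C^\beta$ action of the lifted triangle group $\widetilde\Gamma_T$ on $\R$ cannot have a global fixed point, which should follow from the standard action being globally fixed-point-free together with an appropriate rigidity statement for actions of $\widetilde\Gamma_T$ on the line, obtained e.g. by descending to $S^1$ and invoking rigidity of the Fuchsian action of $\Gamma_T$.

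Once $\psi(T_1)$ is fixed-point-free, the quotient $\R/\langle\psi(T_1)\rangle$ is a $C^\beta$ circle, so there is a $C^\beta$ diffeomorphism of $\R$ conjugating $\psi(T_1)$ to $T_1$. After this conjugation, $\psi(\widetilde\Gamma)$ lies in the centralizer of $T_1$ in $\Diff^\beta(\R)$, namely $\HOZ\cap\Diff^\beta(\R)$, and descends to a faithful representation $\bar\psi\colon\Gamma\to\Diff^\beta(S^1)$. Theorem~\ref{thm:GammaRigid} then provides a conjugating $\bar\varphi\in\Diff^\beta(S^1)$, which lifts to the desired $C^\beta$ conjugation of $\psi$ with the standard inclusion.

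For the recognition claim, let $f\in\HOZ$ and suppose $\phi\colon\langle\widetilde\Gamma,f\rangle\to\langle\widetilde\Gamma,h\rangle$ is an isomorphism with $\phi|_{\widetilde\Gamma}=\id$ and $\phi(f)=h$. Since $f$ commutes with $T_1\in\widetilde\Gamma$, so does $h=\phi(f)$; hence $h\in\HOZ$ and descends to some $\bar h\in\Homeo(S^1)$. The kernel of the natural map $\langle\widetilde\Gamma,f\rangle\to\Homeo(S^1)$ is exactly the central subgroup $\langle T_1\rangle$ (and similarly for $h$), which is preserved by $\phi$ since $\phi|_{\widetilde\Gamma}=\id$. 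Hence $\phi$ induces an isomorphism $\bar\phi\colon\langle\Gamma,\bar f\rangle\to\langle\Gamma,\bar h\rangle$ with $\bar\phi|_\Gamma=\id$ and $\bar\phi(\bar f)=\bar h$. The group $\Gamma$ has small supports everywhere on $S^1$: for any open $U\subset S^1$ one can find a hyperbolic element of $\Gamma_T$ contracting the support interval $I$ of $\Gamma_A$ into $U$, so conjugates of nontrivial elements of $\Gamma_A$ have support inside $U$. Theorem~\ref{theo:RecoGene}(1) now yields $\bar f=\bar h$, and thus $h=f\circ T_1^{k}$ for some $k\in\Z$, as required.
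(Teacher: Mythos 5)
Your overall route coincides with the paper's: show the central translation acts freely, conjugate it to $x\mapsto x+1$, descend to a faithful $C^{1+\alpha}$ action of $\Gamma$ on $S^1$, invoke Theorem~\ref{thm:GammaRigid}, and lift the conjugacy; for recognition, quotient by the center and apply Theorem~\ref{theo:RecoGene}(1) to $\Gamma$, which has small supports everywhere. The recognition half of your argument is complete and essentially identical to the paper's.

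The gap is exactly the step you flag, and your proposed way of closing it does not work. You reduce correctly to the situation where $\widetilde{\Gamma_T}$ has a global fixed point in $\R$ (the monotone-orbit argument for the roots of $T_1$ is the same as the paper's), but then you propose to derive a contradiction from ``an appropriate rigidity statement for actions of $\widetilde{\Gamma_T}$ on the line, obtained e.g.\ by descending to $S^1$.'' This is circular: descending to $S^1$ requires precisely that the central element $\psi(T_1)$ be fixed-point free, which is what is at stake, and no rigidity statement for line actions of $\widetilde{\Gamma_T}$ is available independently (Theorem~\ref{thm:GammaRigid} concerns $\Gamma$ acting on the circle). Moreover the obstruction is not topological: one can compress the standard action of $\widetilde{\Gamma_T}$ into a bounded interval and extend by the identity, obtaining a faithful action on $\R$ by homeomorphisms with global fixed points, so no $C^0$ rigidity argument can rule this configuration out. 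The paper closes the contradiction with the Thurston stability theorem: at a boundary point of the global fixed set, the group of germs is a quotient of $\widetilde{\Gamma_T}$, so by Thurston stability it is trivial or admits a nontrivial homomorphism to $(\R,+)$; but any homomorphism $\chi\colon\widetilde{\Gamma_T}\to\R$ satisfies $2\chi(\widetilde s)=3\chi(\widetilde r)=7\chi(\widetilde t)=\chi(z)$ and $\chi(\widetilde t)+\chi(\widetilde r)+\chi(\widetilde s)=k\chi(z)$ with $k\in\Z$, forcing $\bigl(\tfrac12+\tfrac13+\tfrac17-k\bigr)\chi(z)=0$ and hence $\chi=0$. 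Triviality of the germs then propagates (using finite generation) to give a one-sided neighborhood of global fixed points, contradicting the choice of boundary point and the faithfulness of the $C^{1}$ action. This use of $C^1$ regularity via Thurston stability, together with the vanishing of homomorphisms to $\R$, is the missing idea; without it the freeness of $\psi(T_1)$ is not established.
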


On the closed interval, more work is needed.  Let 
$\Gamma_f$ denote the group generated by $\Gamma_A$ as above, acting smoothly
on $I = [0,1]$ and conjugate on $(0,1)$ to the standard affine action,
together with a homeomorphism $f$ with support in $(0,1)$.
For simplicity we suppose also that the set $\{x\in(0,1)\mid f(x)\neq x\}$
is connected. Then we have the following.
\begin{theorem}[Differential rigidity on the interval] \label{thm:IntervalRigid}
  Let $\phi\colon\Gamma_f\to\Diff^{1+\alpha}([0,1])$ be a faithful morphism.
  Then there exists an interval $(a,b)\subset[0,1]$, invariant under
  $\phi(\Gamma_f)$, and a $C^{1+\alpha}$-diffeomorphism
  $h\colon(0,1)\to(a,b)$ conjugating $\phi(\Gamma)$ to the original action on $[0,1]$.
  In particular, this implies $f\in \Diff^{1+\alpha}([0,1])$.
\end{theorem}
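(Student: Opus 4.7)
The plan is to first pin down $\phi|_{\Gamma_A}$ up to $C^{1+\alpha}$-conjugacy on an invariant subinterval, and then apply Theorem~\ref{theo:BSReco} (map recognition for $BS(1,n)$) to recover $\phi(f)$. Concretely, since $\Gamma_A$ contains $BS(1,n)$ and an irrational homothety $x\mapsto \mu x$, the techniques of Bonatti--Monteverde--Navas--Rivas used for Theorem~\ref{thm:GammaRigid}, combined with a precise Sternberg linearization at the fixed point of the homothety, should produce a maximal open interval $(a,b)\subset[0,1]$ preserved by $\phi(\Gamma_A)$ together with a $C^{1+\alpha}$-diffeomorphism $h\colon(0,1)\to(a,b)$ such that $\phi(\gamma)|_{(a,b)}=h\gamma h^{-1}$ for every $\gamma\in\Gamma_A$ (identifying $\gamma$ with the original action on $(0,1)$). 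The interval $(a,b)$ is intrinsic to $\phi(\Gamma_A)$: it may be characterized dynamically as the largest open interval on which the image of the translation-like generator of $\Gamma_A$ has no fixed point.

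Next I would verify that $(a,b)$ is also preserved by $\phi(f)$, so that $\tilde f := h^{-1}\phi(f)h$ is a well-defined homeomorphism of $(0,1)\cong \R$. The connectedness hypothesis on $\{x\in(0,1):f(x)\neq x\}$ plays a central role here: writing the support of $f$ as $[p,q]\subset(0,1)$, one sees that for every $\gamma\in\Gamma_A$ with $\gamma((p,q))\cap(p,q)=\emptyset$, the elements $f$ and $\gamma f\gamma^{-1}$ have disjoint supports and so commute in $\Gamma_f$. Combined with the affine dynamics of $\phi(\Gamma_A)$ on $(a,b)$ established in Step~1, together with the fact that $\Gamma_A$-translates of $(p,q)$ exhaust $(0,1)$, these commutation relations should force $\Supp(\phi(f))\subset(a,b)$; in particular, $\tilde f$ has compact support in $\R$.

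Finally, conjugating $\phi|_{\langle BS(1,n),f\rangle}$ by $h$ on the target yields a group isomorphism $\langle BS(1,n),f\rangle\to\langle BS(1,n),\tilde f\rangle$ that is the identity on $BS(1,n)$ and sends $f$ to $\tilde f$. Theorem~\ref{theo:BSReco} then forces $\tilde f=f$; equivalently, $f=h^{-1}\phi(f)h$ on $(0,1)$. Hence $f\in \Diff^{1+\alpha}((0,1))$, and since $f$ is trivial outside $[p,q]\subset(0,1)$ it extends to a $C^{1+\alpha}$-diffeomorphism of $[0,1]$. The main obstacle I anticipate is the first step: unlike in the circle case (Theorem~\ref{thm:GammaRigid}) or for the lifted action (Proposition~\ref{prop:Rigid_lifts}), one cannot invoke a Fuchsian triangle group here, so all of the rigidity must be extracted from the affine-like structure of $\Gamma_A$ alone. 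Carefully combining the BMNR topological rigidity with Sternberg linearization at the fixed point of the irrational homothety, and verifying that the resulting conjugacy is genuinely $C^{1+\alpha}$ across $(a,b)$, is the technical heart of the argument.
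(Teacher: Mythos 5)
Your toolkit is the right one (BMNR, Sternberg/Takens regularity, Baumslag--Solitar recognition), but the order of your argument is inverted relative to what can actually be proved, and the steps you defer are exactly where the content lies. The main gap is Step~1: BMNR controls only the pair $(\phi(a),\phi(b))$, and only on each of the (possibly several) components $I_1,\dots,I_m$ of the complement of $\Fix(\phi(b))$; it says nothing about $\phi(\mu)$, which a priori need not preserve these components nor restrict on any of them to a homothety, and Sternberg linearization cannot even be set up for $\phi(\mu)$ until you already know a topological conjugacy of the whole $\Gamma_A$-action on a fixed component. In the paper this topological control of $\phi(\mu)$ is obtained \emph{after} recovering $\phi(f)$, by conjugating $f$ into arbitrarily small intervals and applying the morphism version of BS-recognition (Lemma~\ref{lem:ConjGamma}); Proposition~\ref{prop:Takens} only upgrades an already-established $C^0$-conjugacy of all of $\Gamma_A$ to $C^{1+\alpha}$, at the very end. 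Relatedly, your characterization of $(a,b)$ as ``the largest interval on which the translation-like generator has no fixed point'' is not well defined (there are finitely many maximal such components), and the choice matters: if you pick a component on which $\phi([f,b])$ acts trivially, recognition can only return $\tilde f=\mathrm{id}$ there, with no contradiction to faithfulness. The paper must choose a component on which both $\phi([f,b])$ and $\phi([\mu f\mu^{-1},b])$ are nontrivial.

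Two further points would fail as written. First, $\Supp(\phi(f))\subset(a,b)$ is generally false: a ``diagonal'' action, standard on one component and given by any other homomorphic image of $\Gamma_f$ on a disjoint component, is faithful and has $\phi(f)$ supported on several components. What is needed (and is nontrivial) is only that $\phi(f)$ preserves each $I_j$, i.e.\ fixes the endpoints $a_j,b_j$; your ``disjoint supports commute'' observation does not rule out $\phi(f)$ moving an endpoint, and the paper's proof of this uses a careful fixed-point/commutation argument adapting Lemma~\ref{lem:PtFixe}. Second, the map $\gamma\mapsto h^{-1}\bigl(\phi(\gamma)|_{(a,b)}\bigr)h$ is only a homomorphism, not an isomorphism, since elements supported outside $(a,b)$ are killed; so Theorem~\ref{theo:BSReco} does not apply and one needs the morphism statement (Proposition~\ref{prop:BSRecoMieux}), whose conclusion is only ``$\tilde f$ is a translation or $\tilde f(x)\in\{x,f(x)\}$ pointwise.'' Upgrading this to $\tilde f=f$ is where the connectedness of $\Supp(f)$ and the nontriviality of $\phi([f,b])$ on the chosen component are used (Lemma~\ref{lem:Conj1}); your proposal uses the connectedness hypothesis elsewhere and leaves this final identification unjustified.
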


\subsection*{Regularity of conjugacies} 
A major ingredient the examples above is a result on {\em regularity of conjugacies}
(Proposition \ref{prop:Takens}), reminiscent of a theorem of Takens, which may
be of independent interest.
We show that the group $\Gamma_A$ described above has the property that, if
$\phi\colon\Gamma_A\to\Diff^{1+\alpha}([0,1])$ is conjugate to the standard
affine action by a homeomorphism $f$, then $f$ is in fact of
class~$C^{1+\alpha}$.
This is the main content of Section \ref{sec:Takens}.

\subsection*{Higher dimension}
We hope that this application to problems of critical regularity
(\textsl{via} Proposition~\ref{prop:DRCR}) provides motivation to construct and
study groups of diffeomorphisms of higher dimensional manifolds with differential
rigidity, or that exhibit the regularity of conjugacies property of
Proposition~\ref{prop:Takens}.
This seems to be a challenging problem, and the situation there may be
quite different. Note, for example, that Harrison~\cite{Harrison, Harrison2}
constructed $C^r$ diffeomorphisms of manifolds
(in all dimensions $\geqslant 2$) that are not topologically conjugate to any
$C^s$ diffeomorphisms for any $s>r$.

\subsection*{Acknowledgements}
The authors thank S. Kim and T. Koberda for feedback on an early version
of this work. K.M. was partially supported by NSF grant DMS 1844516.
This work was started when both authors were in Montevideo, we thank the
Universidad de la Rep\'ublica for its hospitality.


\section{Proof of Theorem~\ref{theo:RecoGene}: map recognition}\label{sec:MapRec}

Statements~(1) and~(2) of this theorem follow the same general strategy of
proof, so we treat them in parallel.
Throughout this section, $X$ denotes a Hausdorff topological space.
We will suppose furthermore that $X$ has cardinal $\geqslant 3$.
Provided that there exists a group acting on it with small supports everywhere
or with the contraction property this implies that $X$ is infinite, and
has no isolated points. In the case $\mathrm{Card}(X)\leqslant 2$,
Theorem~\ref{theo:RecoGene} is immediate.

\begin{lemma}\label{lem:Outil1}
  Let $\Gamma\subset\Homeo(X)$ be a group with maps with small supports
  everywhere. Let $f\colon X\to X$ be a continuous map, and let $x\in X$.
  Then the following holds.
  \begin{enumerate}
  \item If $f(x)\neq x$, then for any sufficiently small neighborhood
    $U_x$ of $x$ and $\gamma\in\Gamma\smallsetminus\{\id\}$ with
    $\Supp(\gamma)\subset U_x$, the maps $\gamma$ and $f$ do not commute.
  \item If $x\not\in\Supp(f)$, then for any sufficiently small neighborhood
    $U_x$ of $x$ and $\gamma\in\Gamma$ with
    $\Supp(\gamma)\subset U_x$, the maps $\gamma$ and $f$ commute.
  \end{enumerate}
\end{lemma}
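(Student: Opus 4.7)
For both parts I plan to choose the neighborhood $U_x$ so that $f$'s behavior relative to $U_x$ is controlled, and then exploit disjointness of supports. For part~(1), since $f(x) \neq x$ and $X$ is Hausdorff, I pick disjoint open sets $V_1 \ni x$ and $V_2 \ni f(x)$, and set $U_x := V_1 \cap f^{-1}(V_2)$; this is an open neighborhood of $x$ with $f(U_x) \subset V_2$, hence $f(U_x) \cap U_x = \emptyset$, and any smaller open neighborhood of $x$ inherits this. For $\gamma \in \Gamma \smallsetminus \{\id\}$ with $\Supp(\gamma) \subset U_x$, the open set where $\gamma$ moves points is nonempty and contained in $\Supp(\gamma) \subset U_x$, so there exists $y \in U_x$ with $\gamma(y) \neq y$. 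Assuming $f\gamma = \gamma f$, the point $f(y) \in f(U_x)$ lies outside $\Supp(\gamma)$, so $\gamma$ fixes $f(y)$, and hence $f(\gamma(y)) = \gamma(f(y)) = f(y)$. Since $f$ is a homeomorphism (as in the theorem's applications) and therefore injective, this forces $\gamma(y) = y$, contradicting the choice of~$y$.

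For part~(2), the hypothesis $x \notin \Supp(f)$ provides an open neighborhood $W := X \smallsetminus \Supp(f)$ of $x$ on which $f$ is the identity. I take any $U_x \subset W$; then for $\gamma \in \Gamma$ with $\Supp(\gamma) \subset U_x$, the supports of $\gamma$ and $f$ are disjoint. The conclusion follows from the standard fact that two homeomorphisms with disjoint supports commute, which I would verify directly: each of $f$ and $\gamma$ is the identity on the other's support, and as a homeomorphism each preserves both its own support and its complement, so the identity $f\gamma(y) = \gamma f(y)$ can be checked pointwise on the three regions $\Supp(f)$, $\Supp(\gamma)$, and their common complement.

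The only substantive step is in part~(1): passing from $f(\gamma(y)) = f(y)$ to $\gamma(y) = y$ uses local injectivity of $f$, which is automatic if $f$ is a homeomorphism — the setting in which the lemma is invoked. Everything else reduces to careful bookkeeping with supports and the Hausdorff-plus-continuity construction of $U_x$.
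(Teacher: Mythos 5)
Your proof is correct, and it is exactly the ``straightforward exercise'' the paper omits: shrink $U_x$ so that $f(U_x)\cap U_x=\emptyset$ (resp.\ $U_x\cap\Supp(f)=\emptyset$) and compare the two compositions at a point of the (open, nonempty) moved set of $\gamma$, using that $\gamma$ fixes everything outside $U_x$.

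Your caveat about injectivity is well placed: as literally stated for an arbitrary continuous map the lemma fails (for part~(1), a constant map $f\equiv c$ with $c\neq x$ commutes with every $\gamma$ supported in a small $U_x$ avoiding $c$; part~(2) can fail too, since a non-injective $f$ need not preserve $\Supp(f)$), so the statement must be read for injective maps or homeomorphisms, which is the only setting in which the paper invokes it, namely in the proof of Lemma~\ref{lem:OutilSupport}.
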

The proof is a straightforward exercise, which we omit. 
The version for groups with the contraction property is
more interesting:
\begin{lemma}\label{lem:Outil2}
  Let $\Gamma \subset \Homeo(X)$ have the contraction property,
  let $f\colon X\to X$ be a continuous map, and let $x\in X$.
  Then the following holds.
  \begin{enumerate}
  \item If $f(x)\neq x$, then for any sufficiently small neighborhood $U_x$
    of $x$ and every $\gamma\in\Gamma$ mapping $X\smallsetminus U_x$ into $U_x$,
    the maps $f$ and $\gamma f\gamma^{-1}$ do not commute.
  \item If $x\not\in\Supp(f)$, then for any sufficiently small neighborhood
    $U_x$ of $x$, and every $\gamma\in\Gamma$ mapping $X\smallsetminus U_x$ into
    $U_x$, the maps $h$ and $\gamma f\gamma^{-1}$ commute.
  \end{enumerate}
\end{lemma}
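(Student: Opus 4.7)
The plan is to follow the same template as Lemma~\ref{lem:Outil1}: pick a neighborhood $U_x$ of $x$ small enough to make $f(x)\neq x$ or $x\notin\Supp(f)$ manifest dynamically, then exploit the contraction property to control orbits. A useful preliminary observation is that if $\gamma\in\Homeo(X)$ satisfies $\gamma(X\smallsetminus U_x)\subset U_x$, then surjectivity of $\gamma$ forces $X\smallsetminus U_x\subset\gamma(U_x)$, hence equivalently $\gamma^{-1}(X\smallsetminus U_x)\subset U_x$: both $\gamma$ and $\gamma^{-1}$ trap the complement of $U_x$ inside $U_x$.

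For part~(2), the goal is to make the supports of $f$ and $\gamma f\gamma^{-1}$ disjoint. Since $\Supp(f)$ is closed and $x\notin\Supp(f)$, I take $U_x$ to be a neighborhood of $x$ inside $X\smallsetminus\Supp(f)$, so that $f$ is the identity on $U_x$ and $\Supp(f)\subset X\smallsetminus U_x$. Because $\gamma$ is a homeomorphism one has $\Supp(\gamma f\gamma^{-1})=\gamma(\Supp(f))\subset\gamma(X\smallsetminus U_x)\subset U_x$. The two supports are now disjoint, and each of $f$ and $\gamma f\gamma^{-1}$ preserves its own support and fixes the complement, so a short case check (test point in $\Supp(f)$, in $U_x$, or in neither) yields $f\circ\gamma f\gamma^{-1}=\gamma f\gamma^{-1}\circ f$.

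For part~(1), taking $f$ to be a homeomorphism, I use $f(x)\neq x$, continuity, and the Hausdorff property to shrink $U_x$ so that $U_x$ is disjoint from both $f(U_x)$ and $f^{-1}(U_x)$, and further, using $\mathrm{Card}(X)\geqslant 3$ and the absence of isolated points, so that some point $w$ lies in $X\smallsetminus(U_x\cup f^{-1}(U_x))$; thus both $w$ and $f(w)$ lie outside $U_x$. Then for any $\gamma$ with the contraction property and any $y\notin U_x$, the chain
\[ y\longmapsto\gamma^{-1}(y)\in U_x\longmapsto f(\gamma^{-1}(y))\in f(U_x)\subset X\smallsetminus U_x\longmapsto\gamma f\gamma^{-1}(y)\in U_x \]
shows $\gamma f\gamma^{-1}(y)\in U_x$. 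Applying this to $y=w$ and $y=f(w)$ and composing once more with $f$ yields $f\gamma f\gamma^{-1}(w)\in f(U_x)\subset X\smallsetminus U_x$ while $\gamma f\gamma^{-1}f(w)\in U_x$, so these disagree at $w$. The main obstacle is this choice of witness: because $x\in U_x$, the contraction hypothesis says nothing about $\gamma^{\pm 1}(x)$, so the natural candidate $x$ is useless, and one must instead secure a point $w$ whose one-step $f$-orbit stays outside $U_x$; this is precisely where the smallness of $U_x$ and the hypothesis $\mathrm{Card}(X)\geqslant 3$ are both needed.
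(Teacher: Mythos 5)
Your proof is correct and follows essentially the same route as the paper: for (2) you force disjoint supports by taking $U_x\subset X\smallsetminus\Supp(f)$, and for (1) you pick $U_x$ disjoint from $f(U_x)$ plus a witness point $w$ with $w,f(w)\notin U_x$, then use the contraction (and the observation that $\gamma^{-1}$ also maps $X\smallsetminus U_x$ into $U_x$) to trap $\gamma f\gamma^{-1}\circ f(w)$ in $U_x$ while $f\circ\gamma f\gamma^{-1}(w)$ lands in $f(U_x)$ -- the paper's argument is identical in mechanism, merely evaluating at $\gamma z$ for an analogous point $z$ instead of at $w$. Your extra assumption in (1) that $f$ be a homeomorphism is unnecessary (you only use $f^{-1}(U_x)$ as a preimage and never apply an inverse of $f$), so nothing is lost relative to the lemma as stated for continuous maps.
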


\begin{proof}
  The second item is nearly immediate. Simply take $U_x$ contained in
  $X\smallsetminus\Supp(f)$.
  Then the support of $\gamma f\gamma^{-1}$ lies in $U_x$, so
  $f$ and $\gamma f\gamma^{-1}$ have disjoint supports, hence commute.

  For the first item, suppose $f(x)\neq x$.
  Since $X$ has no isolated points,
  there exists some other point $z\in X$ such that the set
  $\{x,f(x),z,f(z)\}$ has cardinality~$4$.
  Let $U_x$ be a neighborhood of $x$ such that $\{z, f(z)\} \cap U_x = \emptyset$ and 
  $U_x  \cap f(U_x) = \emptyset$.
  (Such a neighborhood exists since $X$ is Hausdorff). Let 
  $\gamma\in\Gamma$ satisfy $\gamma(X\smallsetminus U_x) \subset U_x$.
  Note that this also implies that $\gamma^{-1}(X\smallsetminus U_x) \subset U_x$.
  It follows that $f\circ (\gamma f\gamma^{-1})(\gamma z) \in f(U_x)$, but 
  $(\gamma f\gamma^{-1})\circ f (\gamma z) \in U_x$, so $f$ does not commute
  with its conjugate by~$\gamma$.
\end{proof}

The lemma above will allow us to reconstruct maps, first by recovering their support. 

\begin{lemma}\label{lem:OutilSupport}
  Let $\Gamma\subset\Homeo(X)$
  be a subgroup, either with small supports everywhere, or with the
  contracting property. Let $f,h\in\Homeo(M)$ be any homeomorphisms.
  Suppose that there exists a group isomorphism
  \[ \phi\colon\langle\Gamma,f\rangle\to\langle\Gamma,h\rangle \]
  such that $\phi_{|\Gamma}=\id_\Gamma$ and $\phi(f)=h$.
  Then $\Supp(f)=\Supp(h)$.
\end{lemma}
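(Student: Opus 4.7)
The plan is to give a characterization of $\Supp(f)$ in purely algebraic terms inside the group $\langle \Gamma, f \rangle$, using the commutation relations provided by Lemmas \ref{lem:Outil1} and \ref{lem:Outil2}, and then transfer this characterization across $\phi$ to $\langle \Gamma, h \rangle$. Since $\phi$ restricts to the identity on $\Gamma$ and sends $f$ to $h$, any statement of the form ``$\gamma$ commutes with $f$'' or ``$f$ commutes with $\gamma f \gamma^{-1}$'' is equivalent to the analogous statement with $h$ in place of $f$, and this is the whole mechanism.

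For case (1), where $\Gamma$ has small supports everywhere, I would prove the criterion: $x \in \Supp(f)$ if and only if every open neighborhood $U_x$ of $x$ contains the support of some $\gamma \in \Gamma\smallsetminus\{\id\}$ with $\gamma f \neq f \gamma$. The forward direction splits into two cases. If $f(x) \neq x$, this is immediate from Lemma~\ref{lem:Outil1}(1). If $f(x) = x$ but $x \in \Supp(f)$, I pick a net of points $y_n \to x$ with $f(y_n) \neq y_n$, apply Lemma~\ref{lem:Outil1}(1) at each $y_n$ to produce $\gamma_n \in \Gamma\smallsetminus\{\id\}$ with small support near $y_n$ not commuting with $f$, and note that these supports eventually sit inside $U_x$. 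The reverse direction is Lemma~\ref{lem:Outil1}(2): if $x \notin \Supp(f)$, a neighborhood $U_x$ disjoint from $\Supp(f)$ forces every $\gamma$ supported in $U_x$ to commute with $f$.

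For case (2), the analogous criterion is: $x \in \Supp(f)$ if and only if every sufficiently small neighborhood $U_x$ of $x$ contains some $\gamma \in \Gamma$ with $\gamma(X \smallsetminus U_x) \subset U_x$ for which $f$ and $\gamma f \gamma^{-1}$ do not commute. Both directions follow from Lemma~\ref{lem:Outil2}, again with a brief reduction in the case $x \in \Supp(f) \cap \Fix(f)$ to a nearby moved point of $f$. In both cases (1) and (2), the criterion involves only: membership of elements in $\Gamma$, the diffeomorphism $f$, and algebraic equalities/inequalities between products of these; hence it is preserved by $\phi$, yielding $\Supp(f) = \Supp(h)$.

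The one delicate point I expect to be the main obstacle is the handling of points $x \in \Supp(f) \cap \Fix(f)$, since Lemmas~\ref{lem:Outil1} and~\ref{lem:Outil2} only detect non-commutation when $f(x) \neq x$. The fix is to phrase the characterization as a condition on neighborhoods (``every $U_x$ contains a non-commuting $\gamma$'') rather than a pointwise condition at $x$, so that moved points accumulating at $x$ can be used; this is what makes the approximation in the first paragraph of the case split go through.
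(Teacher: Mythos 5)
Your proof is correct and uses essentially the same mechanism as the paper: transferring (non-)commutation of $f$ with small-supported elements of $\Gamma$, or with contracted conjugates $\gamma f\gamma^{-1}$, across $\phi$ via Lemmas~\ref{lem:Outil1} and~\ref{lem:Outil2}. The only difference is organizational: where you handle points of $\Supp(f)\cap\Fix(f)$ by approximating with moved points inside your neighborhood criterion, the paper simply proves the inclusion $\{x\mid f(x)\neq x\}\subset\Supp(h)$ and takes closures, which accomplishes the same thing slightly more quickly.
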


\begin{proof}
  Let $x\in X$ be such that $f(x)\neq x$. Suppose for contradiction
  that $x\not\in\Supp(h)$.
  
  Suppose first that $\Gamma$ has maps with small supports everywhere.
  We use Lemma~\ref{lem:Outil1}. Let $U_x$ be a small neighborhood
  of $x$ and let $\gamma\in\Gamma\smallsetminus\{\id_X\}$ with support
  in $U_x$. Then $f$ does not commute with $\gamma$, while $h$ commutes
  with $\gamma$: this contradicts that~$\phi$ is an isomorphism.
  If instead $\Gamma$ is contracting, we use Lemma~\ref{lem:Outil2}:
  let $U_x$ be a small enough neighborhood of $x$ and let
  $\gamma\in\Gamma$ be an element mapping $X\smallsetminus U_x$ inside $U_x$.
  Then $f$ and $\gamma f \gamma^{-1}$ do not commute,
  while $h$ and $\gamma h \gamma^{-1}$ commute: this again contradicts the
  existence of~$\phi$.
  
  Hence, we have proved the inclusion
  $ \lbrace x\in X\mid f (x)\neq x \rbrace \subset \Supp(h). $
  Taking closures, this implies $\Supp(f)\subset\Supp(h)$.
  The reverse inclusion follows since the 
  roles of $f$ and $h$ are symmetric.
\end{proof}

We now conclude the proof of Theorem~\ref{theo:RecoGene}.
Let $X$ be a Hausdorff topological space, and, as a first case, assume
$\Gamma\subset\Homeo(X)$ is a group with small supports everywhere.
Let $f,h\in\Homeo(X)$ be any two homeomorphisms, and suppose there
exists a group isomorphism $\phi$ as in the statement of Theorem~\ref{theo:RecoGene}.
Suppose for contradiction that there exists $x\in X$ such that
$h(x)\not\in\{x,f(x)\}$. Let $U_x$ be a neighborhood of $x$ such that
$h(x)$ is not in $U_x\cup f(U_x)$, and let $\gamma\neq\id_X$ be an element
of $\Gamma$ with support in $U_x$; suppose furthermore (without loss of
generality) that $x\in\Supp(\gamma)$. Then the commutator map
$[f^{-1},\gamma]=f\circ\gamma^{-1}\circ f^{-1}\circ\gamma$ has support in
$U_x\cup f(U_x)$, while $h(x)\in\Supp([h^{-1},\gamma])$. Hence,
the maps $f_2=[f^{-1},\gamma]$ and $h_2=[h^{-1},\gamma]$ have distinct
supports. On the other hand, the isomorphism $\phi$ restricts to an
isomorphism $\phi_2\colon\langle\Gamma,f_2\rangle\to\langle\Gamma,h_2\rangle$
with the same properties, and this gives a contradiction with
Lemma~\ref{lem:OutilSupport}. Thus, for every $x$ we have $h(x)\in\{x,f(x)\}$,
and symmetrically we have $f(x)\in\{x,h(x)\}$. This implies $h=f$.

Finally, suppose instead that $\Gamma$ is contracting, let $f\in\Homeo(X)$ be a
map with non total support, let $h$ be any map, and suppose there exists
a group isomorphism $\phi$ as above. By Lemma~\ref{lem:OutilSupport}
we have $\Supp(f) = \Supp(h)$, in particular $h$ also has non total support
and $f$ and $h$ again play symmetric roles. For contradiction suppose there
exists $x\in X$ such that $h(x)\not\in\{x,f(x)\}$. Let $y\in X$ be a point
which is not in $\Supp(f)$, and
distinct from $x,f(x),h(x)$.
As $X$ is Hausdorff, there exist a neighborhood $U_x$ of $x$, and a neighborhood
$U_y$ of $y$, such that the sets
$h(U_x)$, $U_x\cup f(U_x)$ and $U_y$ are pairwise disjoint,
and such that $U_y\cap\Supp(f)=\emptyset$.
Let $\gamma_x,\gamma_y\in\Gamma$ be such that $\gamma_x(X\smallsetminus U_x)\subset U_x$
and $\gamma_y(X\smallsetminus U_y)\subset U_y$.
The map $\gamma_y^{-1}\circ f\circ\gamma_y$ has support in $U_y$, so the
map $f_3 = \gamma_x^{-1}\gamma_y^{-1}f\gamma_y\gamma_x$ has support in
$U_x$. It follows that the map $f_4 = [f^{-1},f_3]$ has support in
$U_x\cup f(U_x)$, while the map $h_4 = [h^{-1},h_3]$ has support in
$U_x\cup h(U_x)$. Also, $h_4$ is not the identity in $h(U_x)$, simply
because $h$ is non trivial. However, $\phi$ restricts to a group
isomorphism $\langle\Gamma,f_4\rangle\to\langle\Gamma,h_4\rangle$,
thus Lemma~\ref{lem:OutilSupport} yields a contradiction, as in the
preceding case.
\qed \\

\section{Proof of Theorem \ref{theo:BSReco}: maps recognized by $BS(1,n)$}  \label{sec:BS}
For $2 \leqslant n \in \N $, let 
$BS(1,n)=\langle a,b\mid aba^{-1}=b^n\rangle$ denote the Baumslag--Solitar group, 
with its standard affine action on the real line, defined by
$a\colon x\mapsto nx$ and $b\colon x\mapsto x+1$.

This section is devoted to the proof of the following stronger version of
Theorem~\ref{theo:BSReco}. Here we require only a morphism, not an isomorphism
between groups.
We will need to use this weaker hypothesis in our discussion of an analog
of differential rigidity on the closed interval in Section \ref{sec:CritRegInterval}.
Theorem~\ref{theo:BSReco} follows immediately from the statement below by
taking $\phi$ to be an isomorphism, and applying the result also to $\phi^{-1}$.
\begin{proposition}\label{prop:BSRecoMieux}
  Let $f,h\in\Homeo_+(\R)$, and suppose $f$ has compact support.
  Suppose there is a (not necessarily injective) morphism
  $\phi: \langle BS(1,n),f\rangle\to \langle BS(1,n),h\rangle$ restricting
  to the identity on $BS(1,n)$ and mapping $f$ to $h$. Then either
  $h$ is a translation, or we have, for all $x\in\R$, $h(x)\in\{x,f(x)\}$.
\end{proposition}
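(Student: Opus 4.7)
My plan is to mimic, inside $\langle BS(1,n),f\rangle$, the ``small supports everywhere'' argument of Theorem~\ref{theo:RecoGene}(1). The observation is that while $BS(1,n)$ itself admits no nontrivial elements with small support, conjugating the compactly supported $f$ by elements $\gamma=b^\tau a^{-k}\in BS(1,n)$ (with $\tau$ an $n$-adic rational and $k\geqslant 0$) produces $\gamma f\gamma^{-1}$ with support $\tau+(1/n^k)\Supp(f)$, of arbitrarily small diameter around a dense set of points of $\R$. Via $\phi$, these conjugates map to $\gamma h\gamma^{-1}$, so algebraic relations among conjugates of $f$ transfer to relations among conjugates of $h$.

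The first step is to extract commutation relations. Fix $M$ with $\Supp(f)\subseteq[-M,M]$. For every pair of $n$-adic rationals $\tau,\tau'$ and integer $k\geqslant 0$ with $|\tau-\tau'|>2M/n^k$, the conjugates $b^\tau a^{-k}fa^kb^{-\tau}$ and $b^{\tau'}a^{-k}fa^kb^{-\tau'}$ have disjoint supports, hence commute; these relations transfer under $\phi$ to the corresponding commutations of conjugates of $h$. By continuity of the commutator in the parameters and density of $n$-adic rationals, we obtain $[h,\,b^p h b^{-p}]=1$ for every real $p$ with $|p|>2M$.

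The second and principal step is to deduce from these relations that either $h$ is a translation or $h$ has compact support, ideally with $\Supp(h)\subseteq\Supp(f)$. Writing $h(x)=x+c(x)$, the commutation $[h,\,b^p h b^{-p}]=1$ becomes a functional equation for $c$ which, analyzed asymptotically as $p\to\pm\infty$, forces $c$ to be periodic with period equal to its limit at $\pm\infty$; if either limit is nonzero, periodicity plus the existence of a limit forces $c$ to be constant, giving the translation case. The remaining case, $c\to 0$ at both ends, must be excluded using further relations provided by conjugation by $a^k$ (the multi-scale analogues) and by localization relations of the form $[f,\,b^p a^{-k} f a^k b^{-p}]=1$, valid when $|p|>M(1+1/n^k)$, which should then localize $\Supp(h)$ within $\Supp(f)$.

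Once $\Supp(h)\subseteq\Supp(f)$ is established, the final step runs in parallel with the argument at the end of Section~\ref{sec:MapRec}: if there were a point $x$ with $h(x)\notin\{x,f(x)\}$, one would choose a small neighborhood $U_x$ of $x$ disjoint from $h(x)$ and from $f(U_x)$, together with a conjugate $\gamma f\gamma^{-1}$ of arbitrarily small support in $U_x$ that is nontrivial at $x$; then the commutators $[f^{-1},\gamma f\gamma^{-1}]$ and $[h^{-1},\gamma h\gamma^{-1}]$ --- related by $\phi$ --- would have supports of incompatible location, violating the support control from the second step. The main obstacle I foresee is this second step: extracting enough structure from commutation relations alone to rule out non-translation, non-compactly-supported $h$, since $BS(1,n)$ lacks the outright contraction property and one must exploit the full interplay of translations and dilations at all scales.
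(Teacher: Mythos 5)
The gap is exactly where you flag it: your ``second and principal step'' is the entire content of the paper's Proposition~\ref{prop:ToolBS}, and the argument you sketch for it does not work as stated. Writing $h(x)=x+c(x)$, the relation $[h,\tau_p h\tau_p^{-1}]=1$ is a commutation of $h$ with a \emph{conjugate} of itself, not with $\tau_p$, so it does not translate into a periodicity equation for $c$; and there is no reason at all that $c$ admits limits at $\pm\infty$ (a general element of $\Homeo_+(\R)$ has no such asymptotics), so ``periodicity plus the existence of a limit forces $c$ constant'' has no basis. The paper's route to the dichotomy (translation, or compact support) is genuinely laborious: a fixed-point lemma for a map conjugating two commuting fixed-point-free maps $u<v$ (Lemma~\ref{lem:PtFixe}); the statement that a translation germ at one end forces $h$ to be a global translation (Lemma~\ref{lem:Obs1}); the statement that non-compact support forces $\Fix(h)=\emptyset$, proved by producing two displacement values independent over $\Q$ and propagating fixed points via minimality of $\Z[1/n]$ (Lemma~\ref{lem:Obs2}); and the conclusion that $[h,\tau_t]$ is compactly supported whenever $h$ commutes with $\tau_t h\tau_t^{-1}$ but not with $\tau_t$ (Lemma~\ref{lem:Obs3}), which then makes both germs of $h$ periodic under a set of translations with accumulation points, hence of constant, and then zero, displacement. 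None of this is recoverable from the asymptotic heuristic you propose.

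Two further points. First, your plan to upgrade the conclusion of step 2 to $\Supp(h)\subseteq\Supp(f)$ purely from commutation relations is not what is needed and is not clearly achievable at that stage: since $\phi$ is only assumed to be a morphism, every true relation among $f$ and $BS(1,n)$ transfers to $h$, but a relation \emph{failing} for $f$ tells you nothing about $h$; so contradictions must always take the form ``a word is trivial on the $f$-side but nontrivial on the $h$-side.'' Your final step, which compares supports of $[f^{-1},\gamma f\gamma^{-1}]$ and $[h^{-1},\gamma h\gamma^{-1}]$ ``in both directions,'' does not respect this asymmetry. The paper instead stops at compact support of $h$, then runs a pointwise argument of the correct directional form: choose $U$ separating $x$, $f(x)$, $h(x)$, use a dilatation in $BS(1,n)$ with fixed point in $U$ to contract $\Supp(f)\cup\Supp(h)$ into $U$, form $f_2$ and $h_2$ supported in $U\cup f(U)$ and $U\cup h(U)$ respectively, and use minimality of the $\Z[1/n]$-action to find $\tau$ with $[f_2,\tau^{-1}f_2\tau]=1$ while $[h_2,\tau^{-1}h_2\tau]\neq1$. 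Your step 1 (transferring disjoint-support commutations of rescaled translates of $f$, and the density/continuity extension) is fine, but as it stands the proposal leaves the core of the proposition unproved.
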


In the statement, $\langle BS(1,n),f\rangle \subset \Homeo(\R)$ denotes the
group generated by $BS(1,n)$ and $f$;  the same applies to $h$.
The proof of Proposition~\ref{prop:BSRecoMieux}, like that of
Theorem~\ref{theo:RecoGene}, is through a careful study of the supports of
(non)-commuting elements. The main technical tool is the following.

\begin{proposition}\label{prop:ToolBS}
  Let $h\in\Homeo_+(\R)$ be different from a translation. Then the following
  statements are equivalent:
  \begin{enumerate}
  \item $h$ has compact support.
  \item There exists a compact set $K\subset\R$ such that
    for every element $u\in BS(1,n)$
    with $u(K)\cap K =\emptyset$, we have $[h,uhu^{-1}]=1$;
    and for every element $\gamma\in BS(1,n)$, the commutator $[h,\gamma]$
    also satisfies the same hypothesis: there exists a compact $K'$ such that
    for all $u\in BS(1,n)$ with $u(K')\cap K'=\emptyset$,
    we have $[[h,\gamma], u[h,\gamma]u^{-1}]=1$.
  \end{enumerate}
\end{proposition}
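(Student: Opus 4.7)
\emph{Direction (1) $\Rightarrow$ (2).} This direction is straightforward: set $K=\Supp(h)$, which is compact by hypothesis. If $u\in BS(1,n)$ satisfies $u(K)\cap K=\emptyset$, then $\Supp(uhu^{-1})=u(K)$ is disjoint from $\Supp(h)$, so $h$ and $uhu^{-1}$ commute. For the second half of (2), note that $\Supp([h,\gamma])\subseteq K\cup\gamma(K)$ is still compact, so the same argument applies with $K'=K\cup\gamma(K)$.

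\emph{Direction (2) $\Rightarrow$ (1).} Suppose $h$ satisfies (2) and is not a translation; the goal is to deduce that $h$ has compact support. The key feature of $BS(1,n)$ is that it contains every translation $T_t\colon x\mapsto x+t$ with $t\in\Z[1/n]$, via $a^{-k}b^m a^k=T_{m/n^k}$, and $\Z[1/n]$ is dense in $\R$. Since $h$ is continuous, condition (i) of (2) therefore extends to all real $t$ with $|t|$ sufficiently large: $h$ commutes with $T_t h T_{-t}$. Writing $F(x)=h(x)-x$, this commutation relation becomes the functional equation
\[
F(y)+F(y+t+F(y))=F(y+t)+F(y+F(y+t)),\quad \forall\, y\in\R,\ |t|>t_0.
\]

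\emph{Analysis and conclusion.} Next, I would study the limit set of $F$ at $-\infty$ (and symmetrically at $+\infty$). For each limit point $d$, pick $y_k\to-\infty$ with $F(y_k)\to d$; fixing $z\in\R$ and setting $t_k=z-y_k$, the functional equation in the limit (after passing to a subsequence to ensure $F(y_k+F(z))$ converges to some limit point $d'$) yields $d+F(z+d)=F(z)+d'$. When the limit set at $-\infty$ reduces to a single point, this immediately gives $F(z+d)=F(z)$, so $d$ is a period of $F$; more generally one extracts the same conclusion from the connectedness of the limit set, using that $F$ is continuous on each interval $(-\infty,-N]$. As $h$ is not a translation, $F$ is nonconstant, and a continuous nonconstant function on $\R$ with a full interval of periods is impossible, so the limit set must be $\{0\}$. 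Thus $F\to 0$ at $\pm\infty$. To promote this decay to compact support, I would invoke the second half of (2) with $\gamma=a$: the displacement of $[h,a]$ combines $F$ with its $n$-rescaling $nF(\cdot/n)$, and applying the same asymptotic analysis to $[h,a^k]$ for $k\geqslant 1$ should amplify any residual tail of $F$ until it contradicts the corresponding functional equation. The hard part will be exactly this final step: the decaying-but-non-compactly-supported case is delicate, and it is precisely here that one needs the commutator condition in (2), not just the single-element condition on $h$ itself.
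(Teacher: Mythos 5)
The direction $(1)\Rightarrow(2)$ is fine and matches the paper. The problem is in $(2)\Rightarrow(1)$, where your argument has a genuine gap at exactly the point you flag as ``the hard part.'' Two issues. First, the limit-set analysis is not complete as stated: you must rule out the case where the displacement $F$ is unbounded near $\pm\infty$ (the limit set can then be empty or contain $\infty$), and when the limit set is a nondegenerate interval your identity only gives $d+F(z+d)=F(z)+d'$ with $d'$ a limit point depending on $z$ and the subsequence; the ``connectedness'' remark does not by itself turn this into periodicity of $F$, so the reduction to a singleton limit set is not established. Second, and more seriously, even granting $F\to 0$ at $\pm\infty$, the passage from decay of the displacement to \emph{compact support} --- which is the actual content of the proposition, and the only place the commutator half of hypothesis (2) enters --- is only a suggestion (``amplify any residual tail using $[h,a^k]$''), not an argument. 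Nothing is shown about why a positive, decaying, never-vanishing displacement would contradict the functional equations for the dilation commutators, and it is not clear this route closes.

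For comparison, the paper avoids the asymptotic-decay statement altogether and instead exploits commutators with \emph{translations}. It first shows (Lemma~\ref{lem:Obs1}) that if a germ of $h$ at $\pm\infty$ is a nontrivial translation then $h$ is a translation, and (Lemma~\ref{lem:Obs2}) that an $h$ satisfying the hypothesis with noncompact support has no fixed points at all. Then (Lemma~\ref{lem:Obs3}), using the fixed-point Lemma~\ref{lem:PtFixe} for a map conjugating two ordered, commuting, fixed-point-free homeomorphisms, it shows that whenever $[h,\tau_t h\tau_t^{-1}]=1$ but $[h,\tau_t]\neq 1$, the commutator $[h,\tau_t]$ has compact support --- this is where the second half of (2) is used, since Lemmas~\ref{lem:Obs1} and~\ref{lem:Obs2} are applied to $[h,\tau_t]$ itself. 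Consequently, for a dense set of large $t$ the germs of $h$ at $\pm\infty$ are exactly $t$-periodic; a germ with a non-discrete group of periods has constant displacement, and by Lemma~\ref{lem:Obs1} that displacement is zero, giving compact support. If you want to salvage your approach, you would need to replace the ``amplification'' heuristic by something of this kind: exact periodicity of the germs (obtained from compactly supported commutators), not mere decay, is what makes the endgame work.
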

In the second point of the statement above the set $K'$ depends on $\gamma$.

Before embarking on the proof,
we begin with an easy and useful lemma.
If $u,v\in\Homeo_+(\R)$ we write $u<v$ if for all $x\in\R$, $u(x)<v(x)$.
We write $u\leqslant v$ if for all $x\in\R$, $u(x)\leqslant v(x)$.
\begin{lemma}\label{lem:PtFixe}
  Let $u,v\in\Homeo_+(\R)$ be commuting maps, each without fixed points.
  Suppose $u<v$ and there exists $w\in\Homeo_+(\R)$ with $w u w^{-1}=v$.
  Then $w$ has fixed points.
\end{lemma}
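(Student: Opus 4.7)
The plan is to reduce to the case $u=T_1\colon x\mapsto x+1$ and then exploit the translation number of $v$ to force $w$ to be ``expanding'' enough at infinity that the intermediate value theorem produces a fixed point.

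First I would observe that, since every orientation-preserving homeomorphism of $\R$ preserves the two ends $\pm\infty$, the relation $wuw^{-1}=v$ forces $u$ and $v$ to shift points in the same direction (otherwise $u$ and $v$ could not be topologically conjugate via an element of $\Homeo_+(\R)$). Replacing $(u,v,w)$ by $(v^{-1},u^{-1},w^{-1})$ if necessary, I may assume $u,v>\id$ on $\R$. Since every fixed-point-free element of $\Homeo_+(\R)$ is conjugate in $\Homeo_+(\R)$ to $T_1$, I can further conjugate everything simultaneously to arrange $u=T_1$; having a fixed point is invariant under this change. Then $v$ commutes with $T_1$, so $v\in\HOZ$, and the hypothesis $u<v$ now reads $v(x)>x+1$ for all $x\in\R$.

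The key observation is that the continuous function $x\mapsto v(x)-x-1$ is $1$-periodic (as $v\in\HOZ$) and strictly positive, hence by compactness bounded below by some $\epsilon>0$. Iterating yields $v^n(y)\geqslant y+n(1+\epsilon)$ for $n\geqslant 0$, so the translation number $\tau(v)=\lim_n(v^n(y)-y)/n$ satisfies $\tau(v)\geqslant 1+\epsilon>1$. From $wu=vw$ I iterate to obtain $w(x+n)=v^n(w(x))$ for every $n\in\Z$, and the standard fact $v^n(y)/n\to\tau(v)$ as $|n|\to\infty$ (using $\tau(v^{-1})=-\tau(v)$ for the direction $n\to-\infty$) gives $w(x+n)/(x+n)\to\tau(v)>1$ as $|n|\to\infty$.

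Consequently $w(y)>y$ for $y$ sufficiently large and positive, while $w(y)<y$ for $y$ sufficiently negative (the ratio $w(y)/y$ still tends to $\tau(v)>1$ but now $y<0$). The continuous function $y\mapsto w(y)-y$ therefore takes both signs, and the intermediate value theorem produces a fixed point of $w$. The main delicate point is the strict inequality $\tau(v)>1$: the pointwise inequality $v>T_1$ does not automatically yield a uniform gap, and without strictness the asymptotic would collapse. It is precisely the reduction to $u=T_1$, which makes $v-T_1$ periodic, that converts the pointwise inequality into a uniform one via compactness.
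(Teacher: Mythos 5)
Your proof is correct and follows essentially the same route as the paper's: reduce to $u,v>\id$, conjugate one of the two commuting maps to the unit translation so that the other has $1$-periodic displacement, extract a uniform gap $\epsilon>0$ by compactness, push it through the iterated relation $wu^n=v^nw$, and conclude by the intermediate value theorem. The only (cosmetic) difference is that you straighten $u$ and phrase the growth estimate via the translation number of $v$, whereas the paper straightens $v$ and bounds $u(x)\leqslant x+1-\varepsilon$ directly.
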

\begin{proof}
  Up to replacing $u$, $v$ and $w$ with their inverses and
  switching the role of $u$ and $v$, we may assume
  without loss of generality that $\id<u<v$.
  Since $v$ is fixed point free, up to conjugacy we may further assume $v(x) = x+1$.
  Since $u$ commutes with $v$, it has compact fundamental domain $[0,1]$, hence
  there exists $\varepsilon>0$ such that for all $x\in\R$, we have
  $u(x)\leqslant x+1-\varepsilon$. By a simple induction this implies that,
  for any integer $N\geqslant 1$, we have
  $u^N(x)\leqslant x+N-N\varepsilon$ and $u^{-N}(x)\geqslant x-N+N\varepsilon$.
  
  Now let $x\in\R$ be any point. If $w(x)<x$ then take $N\geqslant 1$
  such that $w(x)-x+N\varepsilon>0$, and set $y=u^N(x)$. Then we have
  \[w(y) = v^N w(x) = w(x) + N \geqslant w(x) - x + N\varepsilon + u^N(x) > y. \]
  Hence, $w(x)<x$ and $w(y)>y$: this implies that $w$ has
  at least one fixed point.
  If instead $w(x) > x$, take $N\geqslant 1$ with $x - w(x) - N\varepsilon >0$;
  the same reasoning shows that $w(u^{-N})(x) < u^{-N}(x)$, also implying $w$
  has a fixed point.
\end{proof}

Going forward, we denote by $A$ the abelian subgroup $\Z[1/n] \subset BS(1,n)$
consisting of translations, \textsl{i.e.}, the normal subgroup generated
by $b$.
For $t \in \R$, let $\tau_t\colon\R\to\R$ denote
the translation $x \mapsto x+ t$.
We note the following standard fact.

\begin{observation}\label{obs:CentralTrans}
  The centralizer of $A$ in $\Homeo_+(\R)$ is the translation subgroup.
  Indeed, this is true when $A$ is replaced with any dense subgroup of translations.
\end{observation}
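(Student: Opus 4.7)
The plan is short. The key fact is that commuting with a translation $\tau_t$ in $\Homeo_+(\R)$ is a strong linear condition.

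First I would unpack the commutation relation. For $f\in\Homeo_+(\R)$ and $t\in\R$, the identity $f\circ\tau_t=\tau_t\circ f$ reads $f(x+t)=f(x)+t$ for every $x\in\R$. Now let $A\subset\R$ be any dense subgroup (thought of as a subgroup of translations), and assume $f$ commutes with every $\tau_t$ with $t\in A$. Evaluating the relation at $x=0$ yields $f(t)=f(0)+t$ for all $t\in A$, so $f$ agrees with the translation $\tau_{f(0)}$ on $A$.

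Next I would invoke continuity and density to extend this from $A$ to all of $\R$: since $A$ is dense and both $f$ and $\tau_{f(0)}$ are continuous, the equality $f(x)=f(0)+x$ holds for every $x\in\R$. Hence $f=\tau_{f(0)}$ is a translation, and the reverse inclusion (translations commute with each other) is obvious.

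For the specific observation, it only remains to note that the group $A=\Z[1/n]$ arising as the normal closure of $b$ in $BS(1,n)$ is dense in $\R$, so the general statement applies. There is no real obstacle here; the only point to be careful about is that density of $A$ (rather than, say, just being an infinite subgroup) is what is needed to run the continuity argument.
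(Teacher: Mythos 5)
Your argument is correct and complete: the paper states this observation as a standard fact without proof, and your computation $f(t)=f(0)+t$ on the dense subgroup $A$ followed by continuity is exactly the standard argument it implicitly relies on (including the remark that $A=\Z[1/n]$, the normal closure of $b$ under the affine action, is dense). Nothing is missing.
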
 

\begin{proof}[Proof of Proposition~\ref{prop:ToolBS}]
  The implication $(1) \Rightarrow (2)$ is immediate,
  simply take $K=\Supp(h)$, and, for any $\gamma\in\Gamma$,
  take $K'=\Supp([h,\gamma])$.
  So we need only prove the converse.
  The proof has three preliminary steps.
  We state these as Lemmas since we will later apply them to
  a commutator involving
  $h$, rather than $h$.

  \begin{lemma}\label{lem:Obs1}
    Suppose $h \in \Homeo_+(\R)$, and $K \subset \R$ is a compact set such that
    $[h,uhu^{-1}]=1$ for each $u$ where $u(K)\cap K =\emptyset$.
    If the germ of $h$ at either
    $+\infty$ or $-\infty$ is a nontrivial translation, then $h$ is a
    translation.
  \end{lemma}
  \begin{proof}
    Suppose that the germ at $+\infty$ agrees with that of translation
    by some real number $t_0\neq 0$.
    (The case for the germ at $-\infty$ is exactly the same.)
    By Observation~\ref{obs:CentralTrans}, to show that $h$ is a translation
    it suffices to show that $h$ commutes with arbitrarily small translations.
   
    Let $x\in\R$ and $k\geqslant 0$. By hypothesis,
    $h$ commutes with $a^{-k}b^{-N}hb^Na^k$ provided $N$ is large enough.
    Also, for $N>0$ large enough,
    we have
    \[ a^{-k}b^{-N}hb^Na^k(x)=x+\frac{t_0}{n^k} \]
    and
    \[ a^{-k}b^{-N}hb^Na^k(h(x))=h(x)+\frac{t_0}{n^k}. \]
    This yields $h(x)+\frac{t_0}{k}=h(x+\frac{t_0}{k})$ and we are done.
  \end{proof}
  \begin{lemma}\label{lem:Obs2}
    Suppose $h\in \Homeo_+(\R)$ is as in the previous lemma, namely, there
    is a compact $K \subset \R$ such that 
    $[h,uhu^{-1}]=1$ for each $u$ where $u(K)\cap K =\emptyset$.
    Suppose also that $h$ does not have compact support. Then $\Fix(h) = \emptyset$.
   \end{lemma}
  \begin{proof}
    From Lemma~\ref{lem:Obs1} above, either $h$ is a translation (in which case we are done) or
    one germ of $h$, without loss of generality say at $+\infty$,
    is non trivial and not equal to that of a translation.
    Equivalently, the displacement map $x\mapsto h(x)-x$ is not constant
    in any neighborhood of $+\infty$.
    
    Suppose for contraction that $h$ does have a fixed point, $x_0\in\R$.  
    We will show that $h$ has a dense subset of fixed points, \textsl{i.e.}
    $h = \id$, contradicting that $h$ was assumed to have noncompact support.
    
    Let $x_1\in\R$ and $\varepsilon>0$. We want to prove that $h$ has a
    fixed point in the interval $(x_1-\varepsilon,x_1+\varepsilon)$.
    Let $K$ be the compact set given by condition $(2)$, and let $C >0$
    be such that $K \subset (-C, C)$.
    Set $m=\max(x_0,x_1)$, and let $y_1$ be a point in $(m+2C,+\infty)$
    where the displacement of $h$ is not locally constant. Then we can find
    a point $y_2\in(m+2C,+\infty)$ such that the displacements
    $v_1=h(y_1)-y_1$ and $v_2=h(y_2)-y_2$ are independent over $\Q$.
    Now we make the following claim.
    
    \noindent {\bf Claim.} Let $u,v\in(-\infty, m)$ differ by $v_1$ or $v_2$.
    Then $u$ is a fixed point of $h$ if and only if $v$ is.
    
    Let us prove this claim.  We treat the case where $v=u+v_1$ and
    $u$ is a fixed point of $h$, the other cases are symmetric. Let $\delta>0$.
    Since $A=\Z[1/n]$ acts minimally on $\R$, and since $h$ is continuous
    at $y_1$, we can find a point $y_1'\in A\cdot u$ such that $|y_1-y_1'|<\delta$,
    and such that $|(h(y_1)-y_1)-(h(y_1')-y_1')|<\delta$. Provided $\delta$
    is small enough, we also have $|u-y_1'|>2C$. Let $t=u-y_1'\in A$. By
    hypothesis, $h$ commutes with $\tau_t h\tau_t^{-1}$, and since $u$ is
    a fixed point of $h$, this implies that $\tau_t h\tau_t^{-1}(u)$ is also
    a fixed point of~$h$.
    Now, $\tau_t h\tau_t^{-1}(u)=u+(h(y_1')-y_1')$ is within distance $\delta$ from
    $v$: hence $v$ admits fixed points of $h$ in all its neighborhoods,
    and the claim is proved.
    
    Now we can finish the proof of the lemma. Since $v_1$ and $v_2$ are
    independent over $\Q$, there exist $p,q\in\Z$ such that
    $|(x_1-x_0)-(pv_1+qv_2)|<\varepsilon$. 
    Taking $p$ and $q$ to be large, we can also suppose that
    the vectors $pv_1$ and $qv_2$
    have opposite sign. So up to exchanging the two, suppose $pv_1<0$.
    The claim above implies that $x_0+v_1 \in \Fix(h) \cap (-\infty, m)$ and hence,
    can be applied iteratively, showing that
    $x_0+pv_1 \in \Fix(h) \cap(-\infty, m)$. A similar inductive argument with
    $x_0+pv_1$ playing the role of $x_0$ shows that
    $x_0+pv_1 + qv_2 \in \Fix(h)$. This proves the lemma.
  \end{proof}
  \begin{lemma}\label{lem:Obs3}
    Suppose $\tau_{t_0}\in A$ is such that $[h,\tau_{t_0}h\tau_{t_0}^{-1}]=1$
    and $[h,\tau_{t_0}]\neq 1$. Then $[h,\tau_{t_0}]$ has compact support.
  \end{lemma}
  \begin{proof}
    For contradiction, suppose that the hypotheses of the lemma hold, and
    suppose as well that the map $g=[h,\tau_{t_0}]$ is not a translation and
    does not have compact support.
    Then we can apply Lemmas~\ref{lem:Obs1} and~\ref{lem:Obs2} to $g$, and
    deduce that $g$ has no fixed points in $\R$.
    Hence, we have $h>\tau_{t_0}h\tau_{t_0}^{-1}$
    or $h<\tau_{t_0}h\tau_{t_0}^{-1}$. In either case, Lemma~\ref{lem:PtFixe}
    immediately gives a contradiction: hence, $g$ has compact support or is
    a translation. But the latter case is forbiden, by Lemma~\ref{lem:PtFixe}
    (and Lemma~\ref{lem:Obs2} applied to $h$).
    Hence $g$ has compact support.
  \end{proof}
  Now we can finish the proof of Proposition~\ref{prop:ToolBS}.
  Suppose $h \in \Homeo_+(\R)$ satisfies (2) and is not a translation.
  By Observation \ref{obs:CentralTrans} the set of $t\in\R$ such
  that $h$ commutes with $\tau_t$ is nowhere dense, hence the set
  $A_0=\{\tau_t\in A\mid [h,\tau_t]\neq 1\}$ is dense in the set of
  translations. Also, for $t$ large enough
  and $\tau_t\in A_0$, the maps $h$ and $\tau_t h\tau_t^{-1}$ commute, hence,
  by Lemma~\ref{lem:Obs3}, both germs of $[h,\tau_t]$ are trivial.
  Thus, both germs of $h$ are $t$-periodic, for a set of real numbers $t$
  which has accumulation points.
  This implies that both germs of $h$ have constant displacement, and by
  Lemma~\ref{lem:Obs1}
  this displacement is zero. Hence, $h$ has compact support.
\end{proof}
Using this, we prove the main result of this section.  

\begin{proof}[Proof of Proposition~\ref{prop:BSRecoMieux}]
  Let $f \in \Homeo_+(\R)$ have compact support, and suppose that
  $\phi: \langle BS(1,n),f\rangle\to \langle BS(1,n),h\rangle$ is a morphism restricting to 
  the identity on $BS(1,n)$, and with $\phi(f) = h$.
  Suppose also that $h$ is not a translation.
  Using Proposition~\ref{prop:ToolBS} and commutation relations among $f$ and
  elements of $BS(1,n)$ we can conclude that $h$ has compact support.
  As in the proof of Theorem \ref{theo:RecoGene}, we will now show that
  for all $x\in\R$, $h(x)\in\{x,f(x)\}$.

  Suppose for contradiction that there is some point $x$ with $h(x)\not\in\{x,f(x)\}$.
  Let $U$ be a small neighborhood of $x$, chosen small enough so that $U$,
  $f(U)$ and $h(U)$ are pairwise disjoint, and so that no image of $U$
  under translation intersects both $h(U)$ and $f(U)$ simultaneously.  
  Let $\gamma\in BS(1,n)$ be a dilatation
  with fixed point in $U$, and with derivative large enough so that
  $\gamma^{-1} (\Supp(h) \cup \Supp(f)) \subset U$.  
  
  Then the support of the map  
  \[ f_2=\gamma^{-1} f\gamma\circ f\gamma^{-1} f\gamma f^{-1} \]
  is contained in $U \cup f(U)$ and, 
  similarly, the support of  
  \[ h_2=\gamma^{-1} h\gamma\circ h\gamma^{-1} h\gamma h^{-1} \]
  is contained in $U \cup h(U)$.  
  Let $y$ be the rightmost point of $\Supp(h_2) \cap U$.  
  
  Since the action of $A$ is minimal, we can find $\tau \in A$ such that
  $\tau y \in (h(U) - Fix(h_2))$.  In particular, this means that
  $[h_2,\tau^{-1}h_2\tau] \neq 1$, since $\Supp(\tau^{-1}h_2\tau)$ is not
  a $h_2$-invariant set. However, our choice of $U$ ensures that
  $f_2$ and  $\tau^{-1}f_2\tau$ will have disjoint support, and therefore
  commute. This gives the desired contradiction.
\end{proof}


\section{Regularity of conjugacies}  \label{sec:Takens}

One of our ingredients for differential rigidity will be the following
analogue of a theorem of Takens~\cite{Takens}. Takens' theorem states that a
homeomorphism between two smooth manifolds $M$ and $N$, which conjugates
$\Diff^r(M)$ to $\Diff^r(N)$, is necessarily a diffeomorphism of class $C^r$.
Here we specialize to $M=N=[0,1]$, but need only a conjugacy between a
finitely generated affine subgroup.

As in the introduction, let $n\geqslant 2$ and consider the Baumslag-Solitar
group $BS(1,n)$, with
its affine action, together with an extra homothety $x\mapsto \mu x$, with
$\mu \notin \Q$, and let $\Gamma_A$ denote this subgroup of $\Aff(\R)$.
There is a conjugate of this action to an action by diffeomorphisms on $(0,1)$;
which may even taken to be $C^\infty$-tangent to the identity
at~$0$ and~$1$.
\begin{proposition}\label{prop:Takens}
  Let $\alpha>0$, and let $\phi\colon\Gamma_A\to\Diff^{1+\alpha}([0,1])$ be
  an action, $C^0$-conjugate to the standard affine action, so there exists
  a homeomorphism $f\colon(0,1)\to\R$
  such that for every
  $\gamma\in\Gamma_A$, we have $f\circ\phi(\gamma)=\gamma\circ f$.
  Then $f$ is of class~$C^{1+\alpha}$.
  The same holds if $1+\alpha$ is replaced with any modulus of continuity
  $r + \omega$ satisfying Sternberg linearization, as discussed below.
\end{proposition}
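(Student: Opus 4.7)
The plan is to split the proof into a local phase---establishing $C^{1+\alpha}$ regularity of $f$ near the common fixed point of $\phi(a)$ and $\phi(m)$ (where $a(x)=nx$ and $m(x)=\mu x$)---and a global phase, propagating this regularity to all of $(0,1)$ via the minimality of the $\Gamma_A$-action on $\R$. Since both $a$ and $m$ fix $0$, the intertwining relation $f\circ\phi(\gamma)=\gamma\circ f$ forces $p:=f^{-1}(0)\in(0,1)$ to be a common fixed point of $\phi(a)$ and $\phi(m)$.

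For the local phase, the main tool will be Sternberg's $C^{1+\alpha}$ linearization theorem. First I would show that the derivative $\lambda:=\phi(a)'(p)$ is distinct from $1$, so that Sternberg applies; this step should exploit the Baumslag--Solitar relation $\phi(a)\phi(b)\phi(a)^{-1}=\phi(b)^n$ together with the presence of the commuting irrational homothety $\phi(m)$, using $C^{1+\alpha}$ rigidity of commuting diffeomorphisms at a common fixed point. Sternberg will then produce a $C^{1+\alpha}$ local diffeomorphism $h$ fixing $p$ with $h\phi(a)h^{-1}(x)=\lambda x$ near $0$. Since $\phi(a)$ and $\phi(m)$ commute, the germ $q:=h\phi(m)h^{-1}$ commutes with $x\mapsto\lambda x$; differentiating $q(\lambda x)=\lambda q(x)$ yields $q'(\lambda x)=q'(x)$, and iterating this identity forces $q'$ to be constant, hence $q(x)=\nu x$ is linear. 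A single $h$ thus simultaneously linearizes both generators.

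The composite $g:=f\circ h^{-1}$ is then a local homeomorphism at $0$ satisfying $g(\lambda x)=n\,g(x)$ and $g(\nu x)=\mu\,g(x)$. Setting $G(t):=\log g(e^t)$ recasts these as $G(t+\log\lambda)=G(t)+\log n$ and $G(t+\log\nu)=G(t)+\log\mu$. Writing $G(t)=(\log n/\log\lambda)\,t+c(t)$ with $c$ bounded and of period $\log\lambda$, then substituting into the second equation, the boundedness of $c$ forces $\log n/\log\lambda=\log\mu/\log\nu$ and endows $c$ with the further period $\log\nu$; combined with irrationality of the ratio $\log\nu/\log\lambda$ (which the hypothesis on $\mu$ is designed to provide), this forces $c$ to be constant, and a rigidity check yields $\lambda=n$, $\nu=\mu$. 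Hence $g$ is linear and $f=g\circ h$ is $C^{1+\alpha}$ on a neighborhood $V$ of $p$.

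Finally, since $\Gamma_A$ acts minimally on $\R$, the $\phi(\Gamma_A)$-orbit of $V$ covers $(0,1)$: for any $y\in(0,1)$, choosing $\gamma\in\Gamma_A$ with $\phi(\gamma)^{-1}(y)\in V$ and applying the identity $f=\gamma\circ f\circ\phi(\gamma)^{-1}$ expresses $f$ near $y$ as a composition of $C^{1+\alpha}$ maps, completing the proof. The main obstacle will be the initial hyperbolicity step ($\phi(a)'(p)\neq 1$): this is where the specific structure of $\Gamma_A$---both the Baumslag--Solitar relation and the irrational homothety---will need to be leveraged against $C^{1+\alpha}$ rigidity for commuting diffeomorphisms, for instance \textsl{via} the Szekeres flow attached to a hypothetical parabolic fixed point.
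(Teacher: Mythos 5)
Your overall architecture (simultaneously linearize $\phi(a)$ and $\phi(\mu)$ at the interior fixed point $p$ via Sternberg, compare with the topological conjugacy $f$ through functional equations in logarithmic coordinates, then propagate regularity over $(0,1)$ using elements of $\Gamma_A$) is essentially the paper's proof. But there is a genuine gap at exactly the step you flag as ``the main obstacle,'' and it is not repaired later in your argument. The paper's proof begins by invoking the theorem of Bonatti--Monteverde--Navas--Rivas (Theorem~\ref{thm:BMNR}): for a faithful $C^1$ action of $BS(1,n)$ on the interval with no interior global fixed point, the derivative of $\phi(a)$ at its unique interior fixed point is exactly $\pm n$. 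This is a substantial result, not a routine consequence of the Baumslag--Solitar relation plus a commuting homothety, and your proposal neither proves it nor cites a substitute; ``$C^{1+\alpha}$ rigidity of commuting diffeomorphisms'' and ``the Szekeres flow attached to a hypothetical parabolic fixed point'' are at best a heuristic for ruling out $\lambda=1$, and even $\lambda\neq 1$ is weaker than what is needed.

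The later ``rigidity check yields $\lambda=n$, $\nu=\mu$'' cannot do this work. With $f$ only assumed to be a homeomorphism, the system $g(\lambda x)=n\,g(x)$, $g(\nu x)=\mu\,g(x)$ for $g=f\circ h^{-1}$ is solvable for \emph{any} $\lambda>1$: take $g(x)=C\,x^{\beta}$ with $\beta=\log n/\log\lambda$ and $\nu=\mu^{1/\beta}$, and your analysis (boundedness of the periodic part $c$, two incommensurable periods) only shows $c$ is constant, i.e.\ that $g$ is a power map on each side of $0$ --- it does not pin down $\beta=1$. Since $\lambda=n$ is equivalent to the conclusion near $p$ (if $\lambda\neq n$ then $f$ behaves like $|x|^{\log n/\log\lambda}$ at $p$ and is not a $C^1$ diffeomorphism there), the heart of the proposition is precisely the derivative computation your sketch leaves open; in the paper this is imported wholesale from \cite{BMNR}. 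Two smaller remarks: once $\lambda=n$ is known, your derivation that $h$ also linearizes $\phi(\mu)$ (differentiating $q(\lambda x)=\lambda q(x)$) is a fine alternative to the paper's use of uniqueness in Sternberg's theorem; and your logarithmic-coordinate argument is one-sided at $p$, so a priori $g$ could be linear with different slopes on the two sides --- this is easily fixed (as is implicit in the paper's final step) by writing $f=\tau_t\circ f\circ\phi(\tau_t)^{-1}$ with a translation $\tau_t\in\Gamma_A$ moving a neighborhood of $p$ off of $p$.
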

The proof has two main ingredients.  The first is a recent result of 
Bonatti--Monteverde--Navas--Rivas~\cite{BMNR}.

\begin{theorem}[Theorem 1.3 and 1.7 in~\cite{BMNR}]\label{thm:BMNR}
  If $BS(1,n)$ acts by
  $C^1$ diffeomorphisms of $[0,1]$ with no fixed point in $(0,1)$ and
  non-Abelian image, then the action
  is $C^0$ conjugate to the standard action, and the the derivative of $a$
  at its (unique) interior fixed point is $\pm n$.
\end{theorem}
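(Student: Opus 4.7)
The plan is to exploit the BS relation $aba^{-1}=b^n$ to pass to the quotient of $(0,1)$ by $\langle b\rangle$, realize $a$ as a $C^1$ self-cover of the resulting circle, and then invoke classical results on expanding circle maps before sharpening with BS-specific derivative estimates.

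Up to replacing $b$ by $b^{-1}$ (under which $aba^{-1}=b^n$ becomes $a\,b^{-1}a^{-1}=(b^{-1})^n$), I would assume $b(x)>x$ on $(0,1)$. Then $b$ acts freely on $(0,1)$, orbits accumulate only at the endpoints, and the quotient $C:=(0,1)/\langle b\rangle$ is a topological circle with $(0,1)$ as universal cover via the projection $\pi$. The relation $ab^k=b^{nk}a$ implies that $a$ sends $b$-orbits to $b$-orbits and descends to a continuous map $\bar a\colon C\to C$; computing on a fundamental domain $[x_0,b(x_0))$, whose image under $a$ is $[a(x_0),b^n a(x_0))$, shows that $\bar a$ is a $C^1$ self-cover of degree $\pm n$, the sign matching the orientation of $a$. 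Differentiating $aba^{-1}=b^n$ at the endpoints $0$ and $1$ gives $b'(0)^{n-1}=b'(1)^{n-1}=1$, so $b$ is tangent to the identity at both endpoints.

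For the $C^0$ conjugacy, I would use that the iterated BS relation $a^k b a^{-k}=b^{n^k}$ forces $a^k$ to map each fundamental domain of $b$ onto a union of $n^k$ consecutive such domains, so $\bar a$ is topologically expanding on $C$. By Shub's theorem, any $C^1$ expanding self-cover of $S^1$ of degree $n$ is topologically conjugate to $z\mapsto z^n$, and this conjugacy lifts through $\pi$ to a homeomorphism $(0,1)\to\R$ conjugating $(a,b)$ to the standard affine action $(x\mapsto nx,\,x\mapsto x+1)$; extending continuously via the fixed endpoints yields a $C^0$ conjugacy on $[0,1]$. The existence and uniqueness of an interior fixed point $p\in(0,1)$ of $a$ then follow from the corresponding property of the standard model, where $0\in\R$ is the unique interior fixed point of $x\mapsto nx$.

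The final step, $a'(p)=\pm n$, is where $C^0$ conjugacy is insufficient. Differentiating the iterated relation $a^k b=b^{n^k}a^k$ at $p$ yields
\[
(a^k)'(b(p))\,b'(p)\;=\;(b^{n^k})'(p)\,a'(p)^k,
\]
and the orbit $\{a^j(b(p))\}_{j\geqslant 0}=\{b^{n^j}(p)\}_{j\geqslant 0}$ approaches the parabolic endpoint $1$. Fatou-coordinate-type estimates for the parabolic germ of $b$ at $1$ control $(b^{n^k})'(p)$, while the logarithm of the left-hand side grows at rate $k\log|a'(1)|$; matching asymptotics forces $|a'(p)|=n$, with sign determined by whether $a$ preserves or reverses orientation. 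The main obstacle is precisely this step: extracting the exact value $|a'(p)|=n$ from $C^1$-only regularity requires careful control of the parabolic derivative cocycle of $b$ near the endpoints, and constitutes the technical heart of the BMNR argument.
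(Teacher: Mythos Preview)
The paper does not give its own proof of this statement: Theorem~\ref{thm:BMNR} is quoted from \cite{BMNR} (as Theorems~1.3 and~1.7 there) and used as a black box, so there is no in-paper argument to compare against. What follows is therefore an assessment of your sketch on its own terms, relative to the strategy actually used in \cite{BMNR}.

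Your overall architecture---quotient $(0,1)$ by $\langle b\rangle$ to obtain a circle, realize $a$ as a degree $\pm n$ self-cover, lift a model conjugacy, then separately establish $|a'(p)|=n$---is the right shape, and you correctly isolate the derivative identity as the delicate step. Two points deserve care. First, your appeal to Shub's theorem requires $\bar a$ to be genuinely expanding, but the observation that $a^k$ maps one $b$-fundamental domain onto $n^k$ of them is just a restatement of the degree; it does not by itself rule out wandering intervals or establish metric expansion. The argument in \cite{BMNR} does not go through Shub: they work directly with the action on $(0,1)$, using that the $BS(1,n)$-conjugates of $b$ form a group isomorphic to $\Z[1/n]$ acting freely, and build the conjugating homeomorphism from the orbit structure. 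Second, your sketch assumes from the outset that $b$ is fixed-point free on $(0,1)$, whereas the hypothesis in the statement is that the \emph{whole group} has no interior fixed point; reducing to the case $\Fix(b)\cap(0,1)=\emptyset$ is part of what has to be shown (and is where non-Abelian image is used). Your final paragraph on $|a'(p)|=n$ is honest about the difficulty, and the heuristic of comparing growth rates along the $b$-orbit tending to a parabolic endpoint is indeed the mechanism, but the actual estimate in \cite{BMNR} is more delicate than ``Fatou-coordinate-type estimates'' suggests, since only $C^1$ regularity is available.
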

The second ingredient is the Sternberg linearization theorem, or more precisley,
Yoccoz's proof of this theorem in \cite{Yoccoz}, which applies 
to a more general setting than $C^{1+\alpha}$ regularity.
Using this Proposition~\ref{prop:Takens} can be seen to hold when
$C^{1+\alpha}$ is replaced by any modulus of continuity to which
this proof applies.
We now describe the context of interest to us.

Recall that if $\omega\colon[0,+\infty)\to[0,+\infty)$ is a homeomorphism, 
a map $f\colon\R\to\R$ is said to be {\em $\omega$-continuous} if for some
$C>0$ we have
$|f(x)-f(y)|\leqslant C\omega(|x-y|)$ for all $x,y\in\R$. For $\omega(t)=t$, or
$\omega(t)=t^\alpha$, this is the notion of Lipschitz, or H\"older functions,
respectively.
A map $f$ is said to be of class $C^{r+\alpha}$ if it is $C^r$ and $f^{(r)}$ is
$\omega$-continuous.
\begin{theorem}[Sternberg linearization]\label{thm:Sternberg}
  Let $\omega\colon[0,+\infty)\to[0,+\infty)$ be a homeomorphism, and suppose
  that there exists an increasing map $\nu\colon[0,+\infty)\to[0,+\infty)$, 
  which sends $(0,1)$ into $(0,1)$, such that for all $x\in[0,1]$ and
  $t\in[0,+\infty)$ 
  we have $\omega(tx)\leqslant \nu(t)\omega(x)$.
  Let $f$ be a germ of a diffeomorphism of $\R$, with $f(0)=0$ and $f'(0)=a<1$,
  of class $C^{r+\omega}$, with $r\geqslant 1$, or of class $C^r$, $r\geqslant 2$.
  Then there exists a unique germ $h$ of diffeomorphism of $\R$, with
  $h(0)=0$ and $h'(0)=1$, with same regularity as $f$, and such that
  $h$ conjugates $f$ into the multiplication by $a$.
\end{theorem}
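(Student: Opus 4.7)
The plan is to obtain the conjugating germ $h$ as a uniform limit, with one derivative, of the renormalized iterates
\[ h_n(x) = \rho_a^{-n} \circ f^n(x) = a^{-n} f^n(x), \]
where $\rho_a\colon x\mapsto ax$. Any limit $h$ automatically satisfies $h\circ f = \rho_a\circ h$, since $h_n\circ f = \rho_a\circ h_{n+1}$ for every $n$. Uniqueness is handled first and separately: if $h_1,h_2$ both linearize $f$, the germ $g = h_2\circ h_1^{-1}$ commutes with $\rho_a$, so $g(x) = a^{-n} g(a^n x)$ for every $n$; since $g$ is tangent to the identity at $0$, letting $n\to\infty$ forces $g=\id$.

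For existence, the starting point is uniform control of the orbits near $0$. Continuity of $f'$ at $0$ yields, for any $b\in(a,1)$, a neighborhood $[-\delta,\delta]$ on which $|f'(y)|\leqslant b$, so that $|f^n(x)|\leqslant b^n|x|$ and $|f^n(x)-f^n(y)|\leqslant b^n|x-y|$ throughout. Iterating the scaling hypothesis $\omega(tx)\leqslant\nu(t)\omega(x)$ then gives $\omega(b^n s)\leqslant\nu(b)^n\omega(s)$ for $s\in[0,1]$, and this decays geometrically since $\nu(b)\in(0,1)$. Convergence of $h_n'$ follows from the chain-rule formula
\[ h_n'(x) = \prod_{k=0}^{n-1}\frac{f'(f^k(x))}{a}, \]
combined with the $\omega$-continuity bound $|f'(y)-a|\leqslant C\omega(|y|)$: each factor is $1+O(\nu(b)^k)$, so the infinite product converges uniformly to a positive continuous function $h'$, and integrating recovers the uniform convergence $h_n\to h$.

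The main obstacle, and the technical heart of Yoccoz's proof in \cite{Yoccoz}, is to show that $h'$ lies in the same regularity class, that is, is $\omega$-continuous. For $x,y\in[-\delta,\delta]$ one estimates $|\log h'(x)-\log h'(y)|$ through the telescoping sum $\sum_k|f'(f^k(x))-f'(f^k(y))|/a$, then bounds the $k$-th term by
\[ C\,\omega(|f^k(x)-f^k(y)|) \leqslant C\,\omega(b^k|x-y|) \leqslant C\,\nu(b)^k\,\omega(|x-y|), \]
invoking the scaling hypothesis once more. Summing the resulting geometric series gives $|h'(x)-h'(y)|\leqslant C'\omega(|x-y|)$, which is exactly what is needed. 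In the pure $C^r$ case with $r\geqslant 2$, the same strategy applies with $\omega(t)=t$: one differentiates the product formula $r-1$ further times and controls each resulting term by the same geometric summability argument along the contracting orbit.
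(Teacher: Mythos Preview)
Your argument is correct. The iterative construction $h_n=a^{-n}f^n$ is the classical Koenigs--Sternberg scheme, and your use of the scaling hypothesis $\omega(tx)\leqslant\nu(t)\omega(x)$ to obtain the geometric bound $\omega(b^k s)\leqslant\nu(b)^k\omega(s)$ is exactly the right way to push the $C^{1+\omega}$ regularity through the limit. The uniqueness argument is also fine. The $C^r$, $r\geqslant 2$, case is only sketched, but the indicated strategy (differentiate the logarithm of the product, bound each resulting term by a constant times $(f^k)'(x)=O(b^k)$, sum) does go through in one dimension without resonance obstructions.

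However, the paper does not actually prove this theorem: it defers to Yoccoz's proof in \cite[Appendice~4]{Yoccoz}, which proceeds instead by the Banach contraction principle. There one sets up an operator $T$ on a suitable Banach space of $C^{r+\omega}$ germs (essentially $T(h)=\rho_a^{-1}\circ h\circ f$, or its linearized version acting on $h-\id$) and shows that the hypothesis on $\nu$ makes $T$ a contraction; the fixed point is the linearizing germ, and because the Banach space already encodes the target regularity there is no loss between $f$ and $h$. Your approach reaches the same conclusion with the same sharp regularity, but is more hands-on: it produces $h$ explicitly as a limit and verifies the modulus of continuity by a direct telescoping estimate. The fixed-point route is cleaner to state and extends uniformly across regularity classes once the right norm is identified; your iterative route is more elementary and makes the role of the condition $\nu(0,1)\subset(0,1)$ equally transparent, as the summability of $\sum_k\nu(b)^k$.
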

\begin{remark}\label{rmk:Sternberg-Reg}
  The condition on $\omega$ of existence of such a map $\nu$ is sufficient
  to make $C^{r+\omega}$, stable under composition for all $r\geqslant 1$.  
  The condition $\nu(0,1)\subset(0,1)$
  comes into play in the proof
  in regularity $C^{1+\omega}$.
  Examples for $\omega$ include maps equal to $x\mapsto x^\alpha\ln(1/x)^\beta$
  for small $x$, for $\alpha\in(0,1)$ and $\beta\in\R$.
  In one dimensional dynamics some phenomena depend in a
  very subtle way on the regularity; see~\cite[Paragraph~4.1.4]{Navas} for
  a great variation of examples.
\end{remark}

Note that, by choosing appropriate regularities, \textsl{e.g.} using
a function agreeing near $0$ with
$x\mapsto x^\alpha\ln(1/x)^\beta$ and varying $\alpha$ and $\beta$, this shows 
there exist uncountably many finitely generated groups with critical
regularity~$\alpha$, even in the broader sense of the definition of critical
regularity mentioned in the footnote in Section~\ref{ssec:Results}.

We do not give the proof here, as it is classical, but refer the reader
to the proof appearing in
Yoccoz~\cite[Appendice~4]{Yoccoz}.
(See also Navas~\cite[Theorem~3.6.2]{Navas}.)
While our statement of Theorem~\ref{thm:Sternberg} is
more general than that of Yoccoz,
his proof works in this setting as well:  one applies the
Picard-Banach fixed point
theorem to an operator on a Banach space of functions of a given regularity,
a fixed point of this operator gives the map $h$.  It follows
that there is no loss of regularity between the map $f$ and the conjugating
map $h$, contrarily to Sternberg's original proof.
The condition on $\nu$ in our statement is easily verified to be a sufficient condition for 
the operator used in the proof to be a contraction when $r=1$.  (However, the reader should
keep in mind that Theorem~\ref{thm:Sternberg} is false in regularity $C^1$;
a counterexample was given by Sternberg himself.)
S. Kim and Th. Koberda inform us that this condition has a natural equivalent
formulation, called {\em sub-tameness} of $\omega$ in~\cite{CKK}.
It is also shown in~\cite{CKK} that one may equally well work only with
{\em concave} moduli of continuity; however we find the $\nu$ condition
most straightforward to use in Yoccoz's proof.

Now we give the proof of Lemma~\ref{prop:Takens}.
\begin{proof}[Proof of Lemma~\ref{prop:Takens}]
  We assume for simplicity that $f$
  is orientation preserving, this does not affect the argument.
  From Theorem~\ref{thm:BMNR}, $\phi(a)$ has derivative $n$ at $h(0)$.
  By Sternberg linearization theorem, there exists a unique germ $[h]$ of
  $C^{1+\alpha}$-diffeomorphism at $0\in\R$, conjugating $\phi(a)$ to
  multiplication by $n$, and such that $f'(0)=1$. In other words,
  there exists a neighborhood $(-\delta,\delta)$ and a map
  $h\colon(-\delta,\delta)\to\R$ sending $0$ to $\varphi(0)$,
  such that $h'(0)=1$, and $\phi(a)(h(x))=h(nx)$ for all $x$ small enough.
  Note that the map $x\mapsto \phi(\mu)(h(\frac{x}{\phi(\mu)'(0)}))$
  satisfies the same conditions, hence defines the same germ at $0$,
  by uniqueness. Thus, $h$ conjugates $\phi(a)$ to multiplication by $n$, and,
  simultaneously, conjugates $\phi(\mu)$ to multiplication by some scalar
  $\phi(\mu)'(0)$. Considering the action of $\phi(\mu)$ on the translation
  subgroup of $\phi(\Gamma_A)$,
  we conclude that $\phi(\mu)'(0)=\mu$. 
 
  Hence, the map $h\circ f^{-1}$,
  which is defined on $(-\delta,\delta)$, commutes with a dense group of
  dilatations and so is itself a multiplication
  by a scalar.  In particular, $f$ is of class $C^{1+\alpha}$
  on some neighborhood $U$ of $0$.  
  This is enough to deduce that $f$ has $C^{1+\alpha}$ regularity everywhere,
  since for any compact set $K$, there exists $\gamma \in \Gamma_A$  with
  $\gamma(K) \subset U$ and $\phi(\gamma)(f(K)) \subset f(U)$, allowing us
  to write $f$ on $K$ as a composition of locally $C^{1+\alpha}$ maps.
\end{proof}


\section{Differential rigidity and critical regularity} \label{sec:DRCR_cercle}

This and the following section are devoted to giving examples of groups with
differential rigidity and critical regularity.
Our guiding principle is Proposition \ref{prop:DRCR}, which we prove now.

\begin{proof}[Proof of Proposition \ref{prop:DRCR}]
  If $f\in\Diff^\beta(M)$ then of course, the inclusion maps the group
  $\langle\Gamma,f\rangle$ into $\Diff^\beta(M)$. Conversely, suppose that
  $\varphi\colon\langle\Gamma,f\rangle\to\Diff^\beta(M)$
  is a faithful morphism. Since $\beta\geqslant\alpha$, the restriction of $\varphi$
  to $\Gamma$ coincides with the conjugation by some element $g^{-1}\in\Diff^\beta(M)$;
  denote by $c_g$ the inverse of this conjugation. Hence
  $c_g\circ\varphi\colon\langle\Gamma,f\rangle\to\Diff^\beta(M)$ is a faithful
  morphism, restricting to the identity on $\Gamma$ and mapping $f$ to
  $c_g(\varphi(f))$.
  By recognition and since $f$ has support in $U$, it follows that $f=c_g(\varphi(f))$,
  hence $f$ is of class $C^\beta$.
\end{proof}

\subsection{Proof of Theorem~\ref{thm:GammaRigid}}
The remainder of this section is devoted to the proof of Theorem~\ref{thm:GammaRigid}, 
describing examples of $C^{1+\alpha}$-rigid
groups of diffeomorphisms of $S^1$, for all $\alpha>0$.
We note that examples of $C^3$-rigid such groups have actually been known
for some time: the notion of differential rigidity
essentially appeared in work of Ghys~\cite{GhysRigide}, where he proved
that representations of surface groups with maximal Euler class into
$\Diff^r(S^1)$, $r\geqslant 3$, are $C^r$-conjugate to representations
in $\PSL$. Together with an observation of Calegari~\cite{CalegariForcing},
this implies that, for example the Fuchsian $(2,3,7)$--triangle group in
$\PSL\subset\Diff^\infty(S^1)$ is $C^3$-differentiably rigid.
For the proof of Theorem~\ref{thm:GammaRigid},
it will be convenient to work with the following consequence
(essentially a restatement) of the theorem of
Bonatti, Monteverde, Navas and Rivas given above at Theorem~\ref{thm:BMNR}.

\begin{corollary}\label{cor:BMNR}
  Let $\phi\colon BS(1,n)\to\Diff^1([0,1])$ be a faithful morphism.
  Then there exists an integer $m\geqslant 1$, and $m$ open intervals
  $I_1,\ldots,I_m\subset[0,1]$, each invariant under the action by $\phi$,
  and on which the $\phi$-action of $BS(1,n)$ is $C^0$-conjugate (possibly
  by an orientation reversing homeomorphism) to the standard action of
  $BS(1,n)$ on $\R$. Moreover, $\phi(b)$ restricts to the identity on
  $[0,1]\smallsetminus \cup_j I_j$, and $\phi(a)$ has derivative $\pm n$
  at its (unique) fixed point in each $I_j$.
\end{corollary}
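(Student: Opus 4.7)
The plan is to take $\{I_1, \ldots, I_m\}$ to be those components of $(0,1)\setminus F_g$ on which $\phi(b)$ acts nontrivially, where $F_g := \Fix(\phi(BS(1,n)))$ denotes the global fixed point set, and to derive all conclusions by applying Theorem~\ref{thm:BMNR} to the restriction of $\phi$ to each. The initial observation is that every component $K$ of $(0,1)\setminus F_g$ is automatically $\phi$-invariant, has no common interior fixed point, and---since the endpoints of $K$ lie in $F_g$ and are thus fixed pointwise by the whole group---the restriction $\phi(b)|_{\bar K}$ fixes the endpoints of $\bar K$ and is therefore an orientation-preserving self-homeomorphism of $\bar K$.

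The heart of the argument is then a dichotomy on each such $\bar K$. If the restricted image is abelian, the relation $aba^{-1}=b^n$ yields $\phi(b)^{n-1}|_{\bar K} = \id$, and since an orientation-preserving homeomorphism of an interval has infinite order unless it is the identity, this forces $\phi(b)|_{\bar K} = \id$. Otherwise the restricted image is non-abelian, and Theorem~\ref{thm:BMNR} applies (after reparametrizing $\bar K$ to $[0,1]$) to yield a $C^0$-conjugacy from $\phi|_{\bar K}$ to the standard action of $BS(1,n)$---possibly through an orientation-reversing homeomorphism, absorbing the sign ambiguity in the $\pm n$ derivative statement---and identifies $\phi(a)|_{\bar K}$ as having derivative $\pm n$ at its unique interior fixed point. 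The $I_j$ are now declared to be the components of this second (non-abelian) type; on the complement $[0,1]\setminus \bigcup_j I_j$---which is $F_g$ together with the closures of the abelian-type components---the element $\phi(b)$ is identically trivial.

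Finally, to prove $m < \infty$, note that each interior fixed point $x_j$ of $\phi(a)|_{\bar{I_j}}$ satisfies $|\phi(a)'(x_j)| = n > 1$, so is a hyperbolic (and in particular isolated) fixed point of $\phi(a)$. Any accumulation point of $\{x_j\}$ in $[0,1]$ would, by continuity of $\phi(a)'$, itself be a fixed point of $\phi(a)$ with derivative of modulus $n$, hence also isolated---contradicting that it is an accumulation point. Thus $\{x_j\}$ is a closed discrete subset of the compact interval $[0,1]$, and therefore finite. I expect the main obstacle to be the orientation-preservation step for $\phi(b)|_{\bar K}$, which rests on having the endpoints of $K$ pointwise fixed by $\phi(b)$; once that is established, everything else reduces to direct invocations of Theorem~\ref{thm:BMNR}.
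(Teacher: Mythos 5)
Your proposal is correct and takes essentially the same route as the paper: decompose $[0,1]$ along the global fixed set $\Fix(\phi(BS(1,n)))$, apply Theorem~\ref{thm:BMNR} on each component, show $\phi(b)$ is trivial on the degenerate components, and use the $C^1$ hypothesis to bound the number of intervals. Your abelian-image/torsion argument for the degenerate components is just a minor (valid) variant of the paper's classification of torsion-free quotients of $BS(1,n)$ via normal forms, and your hyperbolic-fixed-point accumulation argument simply makes explicit the finiteness step the paper states in one line.
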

\begin{proof}
  Let $I_1, I_2, \ldots$ be the connected components of
  $[0,1] \smallsetminus \Fix(\phi(BS(1,n)))$.  Apply
  Theorem~\ref{thm:BMNR} to the restriction of the action to each $I_i$.
  If the action is faithful on some $I_i$, then $\phi(a)$ has derivative $\pm n$;
  since $\phi(a)$ is $C^1$ there can only be finitely many such.
  It remains only to show that every non-faithful action of $BS(1,n)$ on
  the line has $b$ in its kernel.  This easily follows from the
  observation that every nontrivial element of $BS(1,n)$ has a normal form 
  $a^{-i}b^ja^k$ for some $i,k\geqslant 0$ and $j\in\Z$.  Thus, 
  the only nontrivial, proper, torsion free quotient
  of $BS(1,n)$ is $\Z\simeq BS(1,n)/\langle b\rangle$.
\end{proof}

\begin{proof}[Proof of Theorem~\ref{thm:GammaRigid}]
  Let $\alpha>0$ and let $\phi\colon\Gamma\to\Diff_+^{1+\alpha}$ be a
  faithful morphism.
  
  The bulk of the proof is devoted to showing that the action of $\phi(\Gamma_T)$
  is minimal, which we do now.
  Calegari~\cite{CalegariForcing} showed that, for any nontrivial action of $\Gamma_T$
  on $S^1$ by homeomorphisms, the Euler number of the action must be maximal.
  It then follows from work of Matsumoto \cite{Matsumoto} that the action is
  {\em semi-conjugate} to the standard one.
  Supposing, for contradiction, 
  that $\phi(\Gamma_T)$ is not minimal, this means that
  there is an
  invariant closed set $K \subset S^1$, homeomorphic to a Cantor set, and
  a surjective, monotone, degree one map $s: S^1 \to S^1$ collapsing each
  complimentary region of $K$ to a point, which intertwines the action of
  $\phi(\Gamma_T)$ with the standard action.
  
  Let $\mathring{K}$ denote the set of two-sided accumulation points of $K$.
  This is also a $\phi(\Gamma)$--invariant set, and the restriction of $s$ to $\mathring{K}$ is a 
  homeomorphism conjugating $\phi(\Gamma_T)$ to 
  the standard action of $\Gamma_T$ on $s(\mathring{K})$, which is a dense subset of $S^1$. 
  Thus, the action of $\phi(\Gamma_T)$ on $\mathring{K}$ has the contraction property.
  Adapting Lemma~\ref{lem:Outil2} to this setting, the relations in $\Gamma$
  imply that the following hold for all $x \in \mathring{K}$:
  \begin{enumerate}
  \item if $s(x)\not\in\Supp(b)$, then $\phi(b)(x)=x$ and $\phi(a)(x)=x$, and 
  \item if $s(x)\not\in\Fix(b)$, then $x\in\Supp(\phi(b))$.
  \end{enumerate}
  Here as in Section~\ref{sec:BS} we denote by $a\colon x\mapsto nx$
  and $b\colon x\mapsto x+1$ the two standard generators of $BS(1,n)\simeq\Gamma_A$.
  From (1), we know that $\phi(a)$ and $\phi(b)$ share a fixed point
  in $S^1$, so we may regard them as acting on the interval.
  Corollary~\ref{cor:BMNR} then asserts that
  there is a finite collection of open intervals $I_1,\ldots, I_m \subset S^1$
  on which the action of $\langle a,b\rangle$
  is topologically conjugate to the standard action, with
  $\Fix(\phi(b))= S^1 \smallsetminus \cup_j I_j$. 
  Since $s(\mathring{K})$ is dense, its intersection with $\Fix(b)$ has
  infinite cardinality, so (1) implies that the compliment of $\cup_j I_j$
  has infinite cardinality. In particular, this complement
  contains some open interval $J$ which intersects $\mathring{K}$.
  Similarly, (2) implies that some nonempty subcollection of the intervals $I_1, \ldots, I_m$
  have nontrivial intersection with $\mathring{K}$.
  Reindexing if needed, we suppose $I_1 \cap \mathring{K} \neq \emptyset$.
   
  Since $J$ and $I_1$ each contain an open subset of $\mathring{K}$, there
  exists $\gamma \in \Gamma_T$ such that $\phi(\gamma)(S^1 \smallsetminus J) \subset I_1$,
  hence $\phi(\gamma b\gamma^{-1})$ has support inside $I_1$.
  Now take $U \subset I_1$ to be a connected component of $S^1\smallsetminus K$.
  Since the action of $BS(1,n)$ on $I_1$ is standard, there is some
  $w\in BS(1,n)$  mapping the support of $\phi(\gamma b\gamma^{-1})$ into $U$, and so
  \[ \Supp(\phi(w\gamma b\gamma^{-1}w^{-1})) \subset U.\]
  From this we will derive a contradiction with the relations satisfied by $\Gamma$.
  
  Let $g=w\gamma b\gamma^{-1}w^{-1}\in\Diff_+^\infty(S^1)$.
  Let $x\in J\cap\mathring{K}$ be a point with $s(x)\not\in\Supp(b)$, and
  take $y\in\mathring{K}$ such that $s(y) \notin \Fix(g)$.
  Then, for any small neighborhoods $U_x$ of $s(x)$ and $U_y$ of $s(y)$, if
  $\nu\in\Gamma_T$ satisfies
  $\nu(S^1\smallsetminus U_x) \subset  U_y$, the maps $\nu g\nu^{-1}$ and $g$
  do not commute.
  However, the maps $\phi(\nu g\nu^{-1})$ do commute, for they have disjoint
  supports, a contradiction.  We conclude that $\phi(\Gamma_T)$ acts minimally on~$S^1$.
  
  Since the action of $\phi(\Gamma_T)$ is minimal, it is topologically conjugate
  to the standard action of $\Gamma_T$.
  Thus, after conjugation by some $f \in \Homeo(S^1)$, we may assume that
  $\phi$ restricts to the identity morphism on $\Gamma_T$.
  Now $\Gamma_T$ has the contracting property, so by Theorem~\ref{theo:RecoGene} it
  recognizes $a$, $b$ and $\mu$, which have non-total support.
  It follows that $\phi$ is obtained by conjugation by $f$.
  Finally, Lemma~\ref{prop:Takens} asserts that the map $f$
  is $C^{1+\alpha}$ on the interval $I$ where $a, b$ and $\mu$ are supported.
  By minimality of the action of $\Gamma$, we conclude that
  $f$ is $C^{1+\alpha}$ everywhere, and the Theorem is proved.
\end{proof}

As a consequence, we have the following.
\begin{proof}[Proof of Theorem \ref{coro:CritReg} for $M=S^1$]
Since the map $\Gamma$ constructed above acts on $S^1$ with the contraction
property, and contains maps with non total support, $\Gamma$ also contains
maps with small supports everywhere. By Theorem~\ref{theo:RecoGene}, it thus
follows that $\Gamma$ recognizes all of $\Homeo(S^1)$.
Proposition~\ref{prop:DRCR} now proves Theorem~\ref{coro:CritReg} in the case $M=S^1$.
\end{proof}


\section{Rigidity and critical regularity for actions on $\R$ and $[0,1]$}\label{sec:CritRegInterval}
We proceed with the proofs of Proposition~\ref{prop:Rigid_lifts} and 
Theorem~\ref{thm:IntervalRigid}.  Recall that, as noted in the introduction,
we will be forced to work with slight modifications of the notion of map
recognition rather than directly applying Proposition~\ref{prop:DRCR}.

\subsection{Groups acting on the line}
We have an exact sequence
\[ 1\to \Z \to \HOZ \to \HOS \to 1, \]
where $\HOZ$ is the group of all homeomorphisms of the real line which
commute with the map $z\colon x\mapsto x+1$.
Let $\Gamma$ be the group from the previous section,
and let $\widetilde{\Gamma}$ denote its preimage in
$\HOZ$.
Thus, the group $\widetilde{\Gamma}$ is a subgroup of $\Diff_+^\infty(\R)$,
and it is not hard to check that it is also generated by $6$ elements.
\begin{proposition} \label{prop:Gamma0Rigid}
  The group $\widetilde{\Gamma}$ is $C^{1+\alpha}$-rigid.
\end{proposition}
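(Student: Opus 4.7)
The plan is to reduce Proposition~\ref{prop:Gamma0Rigid} to Theorem~\ref{thm:GammaRigid} by descending any faithful action of $\widetilde{\Gamma}$ on $\R$ to an action of $\Gamma$ on $S^1$. Let $\phi\colon\widetilde{\Gamma}\to\Diff_+^{1+\alpha}(\R)$ be a faithful morphism. Since $z$ is central in $\widetilde{\Gamma}$, its image $\phi(z)$ is central in $\phi(\widetilde{\Gamma})$, and the set $F=\Fix(\phi(z))$ is $\phi(\widetilde{\Gamma})$-invariant. I will first establish that $\phi(z)$ acts freely on $\R$ (that is, $F=\emptyset$), then descend $\phi$ to a faithful morphism $\bar\phi\colon\Gamma\to\Diff^{1+\alpha}(S^1)$, apply Theorem~\ref{thm:GammaRigid} to obtain a $C^{1+\alpha}$-conjugacy $g$ on $S^1$, and finally lift $g$ to a $C^{1+\alpha}$-diffeomorphism of $\R$ realizing $\phi$ as conjugation.

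The main obstacle will be showing $F=\emptyset$. Since $\phi$ is faithful, $F\neq\R$; suppose for contradiction that $F$ is a nonempty proper subset. Then $F$ inherits a total order from $\R$, and the induced action of $\Gamma=\widetilde{\Gamma}/\langle z\rangle$ on $F$ is by order-preserving bijections. In particular, $\Gamma_T$ acts on $F$ preserving its order. Since $\Gamma_T$ contains the torsion elements $s,r,t$, it is not left-orderable; moreover every torsion-free quotient of $\Gamma_T$ is trivial, because the images of the generators $s,r,t$ in any such quotient must vanish. Consequently the kernel of the $\Gamma_T$-action on $F$ is all of $\Gamma_T$, i.e.\ $\phi(\widetilde{\Gamma}_T)$ fixes $F$ pointwise. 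To finish the contradiction I plan to combine this with a Thurston-stability argument at a point $p\in F$: the central relations $\widetilde{s}^2,\widetilde{r}^3,\widetilde{t}^7\in\langle z\rangle$ force every $\phi(\widetilde{\gamma})$ with $\widetilde{\gamma}\in\widetilde{\Gamma}_T$ to have derivative $1$ at $p$, and combined with the commutation relations between $\widetilde{\Gamma}_T$ and $\widetilde{\Gamma}_A$ coming from disjointness of supports of $\Gamma_T$-conjugates of $\Gamma_A$-elements on $S^1$, this should produce a nontrivial element of $\widetilde{\Gamma}$ that $\phi$ sends to the identity.

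Once $\phi(z)$ acts freely, the quotient $\R/\langle\phi(z)\rangle$ is a topological circle carrying a natural $C^{1+\alpha}$ structure, and the induced $\bar\phi\colon\Gamma\to\Diff^{1+\alpha}(S^1)$ is well-defined and faithful: $\bar\phi(\gamma)=\id$ forces $\phi(\widetilde{\gamma})\in\langle\phi(z)\rangle$ for any lift $\widetilde{\gamma}$, and then faithfulness of $\phi$ gives $\widetilde{\gamma}\in\langle z\rangle$. Theorem~\ref{thm:GammaRigid} then yields $g\in\Diff^{1+\alpha}(S^1)$ with $\bar\phi=c_g$. Lifting $g$ to $\tilde g\in\Diff^{1+\alpha}(\R)$ compatibly with both covering projections $\R\to S^1$ conjugates $\phi(z)$ to $z$, and for each generator $\widetilde{\gamma}$ of $\widetilde{\Gamma}$, the map $\tilde g\,\phi(\widetilde{\gamma})\,\tilde g^{-1}$ descends to $\gamma$ on $S^1$, hence equals $\widetilde{\gamma}\cdot z^{k(\gamma)}$ for some integer $k(\gamma)$. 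I will pin these integer ambiguities down using the identities $\widetilde{s}^2=z^{e_1}$, $\widetilde{r}^3=z^{e_2}$, $\widetilde{t}^7=z^{e_3}$ inside $\widetilde{\Gamma}_T$ (possibly after adjusting $\tilde g$ by an integer translation), identifying $\phi$ with conjugation by $\tilde g$ and completing the proof.
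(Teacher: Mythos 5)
Your overall route is the same as the paper's: show that $\phi(z)$ is fixed-point free, descend to a faithful morphism $\overline{\phi}\colon\Gamma\to\Diff^{1+\alpha}(S^1)$, apply Theorem~\ref{thm:GammaRigid}, and lift the conjugating circle diffeomorphism back to $\R$. The first half of your fixed-point argument is fine, and your left-orderability argument (a group acting faithfully by order-preserving bijections of a totally ordered set is left-orderable, hence torsion-free, hence a trivial quotient of $\Gamma_T$) is a valid substitute for the paper's one-line remark that a homeomorphism of $\R$ whose power fixes a point must fix that point. The gap is in how you finish. Knowing that every element of $\phi(\widetilde{\Gamma}_T)$ fixes a point $p\in\Fix(\phi(z))$, even with derivative $1$, is not yet a contradiction: a finitely generated group of $C^1$ diffeomorphisms can perfectly well fix a point with all derivatives equal to $1$ and still act faithfully. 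The statement that this is impossible when the group admits no nontrivial homomorphism to $(\R,+)$ is exactly Thurston's stability theorem, and that is what the paper invokes: at a boundary point of $\Fix(\phi(\widetilde{\Gamma}_T))$ the germ group is nontrivial (by finite generation), so Thurston stability would produce a nontrivial homomorphism $\widetilde{\Gamma}_T\to\R$, which is impossible since the relations $\widetilde{s}^2=\widetilde{r}^3=\widetilde{t}^7=z$ together with $\widetilde{t}\widetilde{r}\widetilde{s}\in\langle z\rangle$ force any such homomorphism to vanish. Your sketch names Thurston stability but then replaces it by an unspecified mechanism (``commutation relations \ldots should produce a nontrivial element of the kernel''), which is a hope rather than an argument; as written, this step does not close. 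Note also that your justification of ``derivative $1$ at $p$'' from the three power relations alone is insufficient: $\phi(z)'(p)$ need not be $1$ a priori, and one must use the full presentation (equivalently, the vanishing of $\Hom(\widetilde{\Gamma}_T,\R)$, which uses the relation $\widetilde{t}\widetilde{r}\widetilde{s}\in\langle z\rangle$ as well).

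Two smaller remarks on your last step. Adjusting $\tilde g$ by an integer translation cannot resolve any ambiguity, since integer translations are central in $\HOZ$ and conjugation by them is trivial on every map that commutes with $z$; the ambiguities on $\widetilde{\Gamma}_T$ in fact disappear automatically because $\widetilde{\Gamma}_T$ admits no nontrivial homomorphism to $\Z$ (the paper treats this lifting step with the same brevity). With the fixed-point-free step repaired by an honest application of Thurston stability, your descent and lifting argument matches the paper's proof.
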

\begin{proof}
  Much of this proof is an adaptation of an argument by Calegari,
  see~\cite{CalegariForcing}.
  
  Consider a faithful morphism $\phi\colon\widetilde{\Gamma}\to\Diff_+^{1+\alpha}(\R)$,
  for some $\alpha>0$.
  The elements $s,r,t\in\Gamma_T\subset\Gamma$ admit lifts
  $\widetilde{s}$, $\widetilde{r}$ and $\widetilde{t}$ satisfying
  $\widetilde{s}^2=\widetilde{r}^3=\widetilde{t}^7=z$.
  Suppose that $\phi(z)$ admits a fixed point in $\R$. Then
  $\phi(\widetilde{s})$, $\phi(\widetilde{r})$ and $\phi(\widetilde{t})$
  each fixes pointwise the fixed point set of $\phi(z)$, simply because
  the dynamics of any map on $\R$ is monotone on its orbits.
  Hence $\widetilde{\Gamma_T}$ has a global fixed point in $\R$, but this
  violates the Thurston stability theorem of \cite{ThurstonStability}.
  
  Thus $\phi(z)$ has no fixed point in $\R$, and so $\phi(z)$ is
  topologically conjugate to the map $z\colon x\mapsto x+1$ itself.
  As $\phi(z)$ is central in $\phi(\widetilde{\Gamma})$, the map $\phi$
  descends to the quotient, defining a faithful morphism
  $\overline{\phi}\colon\Gamma\to\Diff_+^{1+\alpha}(S^1)$, which is
  a $C^{1+\alpha}$-conjugation by Theorem~\ref{thm:GammaRigid}
  above. The conjugating map then lifts to a $C^{1+\alpha}$ diffeomorphism of $\R$,
  realizing $\phi$ by conjugation.
\end{proof}

Using this, we complete the proof of Proposition \ref{prop:Rigid_lifts}, showing
that $\widetilde{\Gamma}$ recognizes maps in $\HOZ$ up to integer translation.
\begin{proof}[Proof of Proposition \ref{prop:Rigid_lifts}]
We have already shown that $\widetilde{\Gamma}$ is $C^{1+\alpha}$-rigid.  
  So let $h\in\HOZ$ and let $f\in\Homeo_+(\R)$ be any map, and suppose that
  there is a group isomorphism
  $\phi\colon\langle\widetilde{\Gamma},h\rangle\to\langle\widetilde{\Gamma},f\rangle$
  which restricts to the identity on $\widetilde{\Gamma}$ and with
  $\phi(h)=f$.  We want to prove that $f=h \circ z^k$ for some $k$. 
  
  First, as $h$ commutes with $z$, so does $f$, and $f\in\HOZ$.
  Hence, $f$ descends to a homeomorphism $\overline{f}$ of the circle
  $\R/\Z$. Since the cyclic group generated by $z$ is central in both
  $\langle\widetilde{\Gamma},h\rangle$ and $\langle\widetilde{\Gamma},h\rangle$,
  the map $\phi$ descends to a group isomorphism between the quotients
  $\langle\widetilde{\Gamma},h\rangle/\langle z\rangle$ and
  $\langle\widetilde{\Gamma},f\rangle/\langle z\rangle$.
  These groups are naturally isomorphic to
  $\langle\Gamma,\overline{h}\rangle$ and $\langle\Gamma,\overline{f}\rangle$, where
  $\overline{h}$ is the homeomorphism of the circle defined by $h$.
  Now, the group $\Gamma$ has maps with small supports everywhere so 
  it follows from Theorem~\ref{theo:RecoGene} that the maps
  $\overline{h}$ and $\overline{f}$ agree.
  Hence, $h$ and $f$ may differ only by an integer translation.
\end{proof}
Combining Propositions~\ref{prop:DRCR} and~\ref{prop:Rigid_lifts} gives
the following, which proves Theorem~\ref{coro:CritReg} for~$M=\R$.
\begin{corollary}[Critical regularity on the line]
  Let $f \in \HOS$ be a map with non-total support, and suppose
  $f \notin \Diff^\beta(S^1)$ for some $\beta >1$.
  Let $\tilde{f}$ be a lift of $f$.
  Then $\langle \widetilde{\Gamma}, f \rangle$ is not isomorphic to
  any subgroup of $\Diff^\beta(\R)$.
\end{corollary}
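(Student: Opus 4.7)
The plan is to mimic the structure of the proof of Proposition~\ref{prop:DRCR}, but with the refined map-recognition statement from Proposition~\ref{prop:Rigid_lifts} in place of a genuine recognition property. Concretely, I would argue by contradiction: suppose there is a faithful morphism
\[ \phi\colon \langle \widetilde{\Gamma}, \tilde{f}\rangle \to \Diff^\beta(\R) \]
for some $\beta > 1$, and derive that $\tilde{f}$ must itself lie in $\Diff^\beta(\R)$, hence that $f\in\Diff^\beta(S^1)$, contradicting the hypothesis.

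Set $\alpha = \beta - 1 > 0$, so that $\Diff^\beta(\R) = \Diff^{1+\alpha}(\R)$. By the $C^{1+\alpha}$-rigidity of $\widetilde{\Gamma}$ established in Proposition~\ref{prop:Gamma0Rigid}, the restriction $\phi|_{\widetilde{\Gamma}}$ is realized by conjugation by some $g\in\Diff^\beta(\R)$. Replacing $\phi$ with $c_{g^{-1}}\circ\phi$, we may assume that $\phi$ restricts to the identity on $\widetilde{\Gamma}$ and sends $\tilde{f}$ to some $h\in\Diff^\beta(\R)$; since $\phi$ is faithful, it is an isomorphism onto its image $\langle\widetilde{\Gamma},h\rangle$.

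Next I would check that the hypotheses of Proposition~\ref{prop:Rigid_lifts} are met. Because $\tilde{f}\in\HOZ$ commutes with $z$ and $\phi(z)=z$, the map $h=\phi(\tilde{f})$ also commutes with $z$, hence belongs to $\HOZ$. Applying the map-recognition statement (up to integer translation) of Proposition~\ref{prop:Rigid_lifts} to the isomorphism $\phi\colon\langle\widetilde{\Gamma},\tilde{f}\rangle\to\langle\widetilde{\Gamma},h\rangle$ then yields an integer $k$ with $h = \tilde{f}\circ z^k$. Since $h,z\in\Diff^\beta(\R)$, it follows that $\tilde{f}=h\circ z^{-k}$ is of class $C^\beta$ on $\R$, and therefore $f\in\Diff^\beta(S^1)$, giving the sought contradiction.

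No single step looks like a serious obstacle: the rigidity half is exactly Proposition~\ref{prop:Gamma0Rigid}, and the recognition half is exactly Proposition~\ref{prop:Rigid_lifts}. The only point that warrants care is verifying that after adjusting $\phi$ by $c_{g^{-1}}$ the resulting faithful morphism is literally an \emph{isomorphism} onto $\langle\widetilde{\Gamma},h\rangle$ (so that Proposition~\ref{prop:Rigid_lifts} applies verbatim) and that the image $h$ genuinely lies in $\HOZ$; both of these are immediate from the centrality of $z$ and the faithfulness of $\phi$. The non-total support hypothesis on $f$ plays no role beyond ensuring we are not in the trivial case $f=\id$ (where the conclusion is vacuous).
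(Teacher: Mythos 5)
Your argument is correct and is exactly the combination the paper intends: use the $C^{1+\alpha}$-rigidity of $\widetilde{\Gamma}$ (Proposition~\ref{prop:Gamma0Rigid}) to reduce to a morphism fixing $\widetilde{\Gamma}$ pointwise, then apply the recognition-up-to-integer-translation statement of Proposition~\ref{prop:Rigid_lifts} to get $\phi(\tilde f)=\tilde f\circ z^k\in\Diff^\beta(\R)$, hence $f\in\Diff^\beta(S^1)$, a contradiction. Your closing remark about the non-total-support hypothesis is harmless here, since your proof (like the paper's) never actually needs it: $\Gamma$ has small supports everywhere and so recognizes all of $\Homeo(S^1)$.
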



\subsection{The closed interval}
We recall the set-up of Theorem~\ref{thm:IntervalRigid}.
Fix $1<n \in \N$ and consider the affine group generated by $BS(1,n)$ and
an irrational dilatation $\mu$, as in Section~\ref{sec:DRCR_cercle}.
Let $f \in \Homeo(\R)$ have compact support.
For simplicity we will also suppose that $\{ x\in\R\mid f(x)\neq x\}$
is an interval (this assumption is not strictly necessary, but it will make our
argument somewhat shorter).
Let $\Gamma$ denote the group generated by  $BS(1,n)$, $\mu$, and $f$, and
suppose $\phi\colon\Gamma\to\Diff^{1+\alpha}([0,1])$ is a faithful morphism.
We will show that there exists an interval
$(a,b)\subset[0,1]$, invariant under $\phi(\Gamma)$, and a
$C^{1+\alpha}$-diffeomorphism $h\colon\R\to(a,b)$ conjugating 
$\phi(\Gamma)$ with the standard action on $\R$. 

Note that this will also immediately imply the remaining case of the critical
regularity statement given in Theorem~\ref{coro:CritReg} in the introduction.

\begin{proof}[Proof of Theorem \ref{thm:IntervalRigid}]
  Let $\phi\colon\Gamma\to\Diff^{1+\alpha}([0,1])$ be as above. 
  Corollary~\ref{cor:BMNR} states that the complement of $\Fix(\phi(b))$ is a union of
  disjoint intervals $I_1=(a_1,b_1)$,\ldots,$I_m=(a_m,b_m)$, each of which admits
  a homeomorphism
  $\psi_j\colon\R\to I_j$ conjugating the standard action of $BS(1,n)$ on $\R$
  with its action \textsl{via} $\phi$ on $I_j$.
  The proof has three steps, which we separate into short lemmas.
   
  \begin{lemma} 
  $\phi(f)$ preserves each interval $I_j$.
  \end{lemma}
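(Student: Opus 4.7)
My plan is to reduce, via an orientation-preservation argument, to showing that $\phi(f)$ preserves $\Fix(\phi(b))$, and then to derive this from a limiting argument built on the commutation relations inherited from the compact support of~$f$. The reduction is immediate: if $\phi(f)$ preserves $\Fix(\phi(b))$ then it permutes the connected components of the complement, which are exactly the $I_j$; since $\phi(f)$ is orientation-preserving on $[0,1]$ and the $I_j$ form a finite linearly ordered collection, the induced permutation must be trivial, giving the conclusion.

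To establish that $\phi(f)$ preserves $\Fix(\phi(b))$, I would argue by contradiction. Suppose some $x_0\in\Fix(\phi(b))$ satisfies $y_0:=\phi(f)(x_0)\in I_j$ for some~$j$. Since $f$ has compact support in~$\R$, for all $N$ large enough the relation $[f,b^N f b^{-N}]=1$ holds in $\Gamma$, and hence $[\phi(f),\phi(b)^N\phi(f)\phi(b)^{-N}]=1$ in $\phi(\Gamma)$. Evaluating both sides at~$x_0$, and using $\phi(b)^{-N}(x_0)=x_0$, yields
\[
\phi(f)\bigl(\phi(b)^N(y_0)\bigr) \;=\; \phi(b)^N\bigl(\phi(f)(\phi(b)^{-N}(y_0))\bigr)
\]
for all such~$N$. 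Letting $N\to+\infty$ and using that $\phi(b)|_{I_j}$ is conjugate to a unit translation via~$\psi_j$, the orbits $\phi(b)^{\pm N}(y_0)$ converge to the endpoints $b_j$ and~$a_j$. The left-hand side tends to $\phi(f)(b_j)$, while the inner expression on the right tends to $\phi(f)(a_j)$. In the generic case where $\phi(f)(a_j)$ lies in the interior of $\Fix(\phi(b))$, the outer $\phi(b)^N$ acts as the identity on a neighborhood of $\phi(f)(a_j)$, so the right-hand side also converges to $\phi(f)(a_j)$. This forces $\phi(f)(a_j)=\phi(f)(b_j)$, contradicting the injectivity of $\phi(f)$.

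The main obstacle is the remaining case, in which $\phi(f)(a_j)$ lies on the boundary of $\Fix(\phi(b))$ or inside some $I_k$: then the outer $\phi(b)^N$ drags the sequence towards an endpoint of~$I_k$, and the limit identity produces only a constraint like $\phi(f)(b_j)=b_k$ rather than a direct contradiction. To close the argument, I plan to run the symmetric limit $N\to-\infty$, obtaining a dual identity on $\phi(f)(a_j)$, and to combine these with the analogous commutation relations produced by conjugation with $\phi(\mu)$ (using that for suitably chosen powers of~$\mu$, the maps $f$ and $\mu^k f\mu^{-k}$ again have disjoint supports in~$\R$, since $\mu\notin\Q$). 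A careful case analysis along these lines should rule out any non-trivial configuration, so that the only consistent possibility is $\phi(f)(x_0)\in\Fix(\phi(b))$, contradicting our initial assumption.
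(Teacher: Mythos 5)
Your reduction to showing that $\phi(f)$ preserves $\Fix(\phi(b))$ is reasonable in outline (though it uses that $\phi(f)$ is orientation-preserving, which you assert without justification; the paper's route, which fixes the endpoints $a_j,b_j$ directly, never needs this). The genuine problem is that the case you actually handle is the easy one, and the case you leave open is exactly where the content of the lemma lies --- and your sketched plan does not close it. Concretely, consider the configuration $\phi(f)(a_1)=a_1$ and $\phi(f)(b_1)=c>b_1$ with $c\in\Fix(\phi(b))$. Then $\phi(f^{-1})$ sends the fixed point $b_1$ of $\phi(b)$ into $I_1$, so this is an instance of your contradiction hypothesis (applied to $f^{-1}$, which is equally admissible), and the inner limit point is $\phi(f^{-1})(a_1)=a_1$, a boundary point of $\Fix(\phi(b))$ --- precisely your problematic case. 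Writing $G=\phi(f^{-1})$, $B=\phi(b)$, $y_0=G(b_1)\in I_1$, your identity $G(B^Ny_0)=B^NG(B^{-N}y_0)$ gives: as $N\to+\infty$ the inner points $G(B^{-N}y_0)$ slide towards $a_1$ inside $I_1$ while the outer $B^N$ pushes them back towards $b_1$, and the identity only records that this race converges to $G(B^Ny_0)\to y_0$; as $N\to-\infty$ both sides tend to $a_1$. Both limits are consistent, so the relations $[f,b^Nfb^{-N}]=1$, used only through such limiting identities, simply do not rule this configuration out. This is exactly the hard half of the paper's proof (that $\phi(f)(b_1)=b_1$), which needs a different mechanism: one first shows $\phi(f)$ would have to move a dense set of points near $a_1$ to the right and be fixed-point free on $(a_1,b_1]$, and then runs the argument of Lemma~\ref{lem:PtFixe} with the commuting conjugates $\phi(\tau_k^{-1}f\tau_k)$ to produce a point displaced the other way. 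Your proposal contains no substitute for this step; the concluding claim that a careful case analysis ``should rule out any non-trivial configuration'' is precisely the missing proof.

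In addition, one of the tools you plan to bring in is not available in general: $\mu$ is a homothety fixing $0$, and nothing in the setup prevents $0$ from lying in the (connected) support of $f$; in that case $\mu^k(\Supp(f))$ meets $\Supp(f)$ for every $k$, so the relations $[f,\mu^kf\mu^{-k}]=1$ need not hold in the group. (This is why the paper only conjugates $f$ by the translations $b^N$, or by elements engineered to contract its support into a prescribed disjoint region.) So even granting the overall strategy, the extra commutations you intend to combine with the dual limits cannot be assumed.
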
 
  
  \begin{proof} 
  For this, it suffices to show that $\phi(f)(a_1)=a_1$,
  and $\phi(f)(b_1)=b_1$, as the remaining intervals can then be shown 
  invariant by applying this argument iteratively to the restriction of the
  action to $[b_1, 1]$ and so on.
 
  So, suppose for contradiction
  that $\phi(f)(a_1) \neq a_1$, up to replacing $f$ with its inverse we may assume that 
  $\phi(f)(a_1)<a_1$. Then we also have $\phi(f)(a_1+\varepsilon)<a_1$ for
  some $\varepsilon>0$. Let $x_0=f(a_1+\varepsilon)$.
  Then, for all $N\in\Z$, we have
  $\phi(b^Nf^{-1}b^{-N})\circ\phi(f)(x_0)=x_0$,
  while $\phi(f)\circ\phi(b^Nf^{-1}b^{-N})(x_0)\neq x_0$ for all $N$.
  This contradicts that $f$ and $b^Nfb^{-N}$ commute for $N$ large
  enough, hence $\phi(f)(a_1)=a_1$. 
  
  Now suppose for contradiction that $\phi(f)(b_1)>b_1$.
  Let $x_0=\phi(f)^{-1}(b_1)\in(a_1,b_1)$.
  As before let $\tau_k \in BS(1,n)$ denote translation by $k$.
  For all $k$ large, $\tau_k f\tau_k^{-1}$
  and $f$ commute, hence
  $\phi(\tau_k f\tau_k^{-1})\circ\phi(f)(x_0)=\phi(f)\circ\phi(\tau_kf\tau_k^{-1})(x_0)$.
  Since the set $\phi(\tau^{-1}_k)(x_0)$ accumulates
  to $a_1$, we get that $\phi(f)(x)>x$ for a dense (and open)
  set of points near $a_1$.  

  We claim next that $\Fix(\phi(f)) \cap (a,b]= \emptyset$, \textsl{i.e.}
  $\phi(f)$ is strictly increasing on this interval.
  Indeed, if $c \in (a,b]$ is a least fixed point of $\phi(f)$, then
  $\phi(\tau_k f\tau_k^{-1})$ fixes $\phi(\tau_k)(c)$ and is increasing on $(a,c)$.
  Since this map commutes with $\phi(f)$, provided $k$ is large, $\phi(f)$ also
  fixes $\phi(\tau_k)(c)$.
  It follows that $\Fix(\phi(f))$ accumulates at $b_1$, so $b_1$ is actually
  fixed by $\phi(f)$, contradicting our assumption.

  Note that this argument applies not only to $f$ but to any compactly supported
  homeomorphism $g \in \Gamma$ with $\phi(g)(b_1) > b_1$. In particular we
  may fix $k$ large and take $g = \tau^{-1}_k f \tau_k f^{-1}$ and conclude
  $\phi(\tau^{-1}_k f \tau_k f^{-1})(x) > x$ for all $x \in (a_1, b_1)$ or
  equivalently that $\phi(\tau^{-1}_k f \tau_k)(y) > \phi(f)(y) > y$ for
  all $y \in (a_1, x_0)$.   

  Now we adapt the proof of Lemma \ref{lem:PtFixe} to derive a contradiction.
  The proof of the Lemma said that if $u$ and $v$ are commuting maps of $\R$,
  or equivalently, commuting maps of an interval $(a,b)$ with $id < u < v$,
  and $w$ satisfies $wuw^{-1} = v$ and $w(x) > x$ for some $x \in (a,b)$,
  then we can find a point $y \in (a, x)$ with $w(y)<y$.
  We assumed there that $u, v$ and $w$ preserved the interval $(a,b)$.
  However, the exact same argument applies to our situation using the maps
  $u = \phi(f)$, $v =\phi(\tau^{-1}_k f \tau_k f^{-1})$, and $w = \phi(f)$,
  all of which fix the point $a_1$.
  Choose a point $x \in (a_1, x_0)$, so we know already that $\phi(f)(x) > x$.
  Running the proof of the lemma (verbatim) shows that there is some point
  $y \in (a_1, x)$ with $\phi(f)(y) < y$, contradicting our first observation above.
  \end{proof} 
  
  \begin{lemma} \label{lem:Conj1}
  The commutator $\phi([f,b])$ acts nontrivially on some $I_j$, and on any such
  interval, $\psi_j$ conjugates the action of $\langle BS(1,n), f\rangle$
  to the standard action.
  \end{lemma}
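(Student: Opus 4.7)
The plan is to exploit the fact that $\phi(b)$ is the identity outside $\bigcup_j I_j$ together with the recognition statement of Proposition~\ref{prop:BSRecoMieux}.

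First, for the non-triviality statement, I would show $\phi([f,b])$ has support contained in $\bigcup_j I_j$: indeed, $\phi(b)$ is the identity on the complement of $\bigcup_j I_j$, and the previous lemma says $\phi(f)$ preserves each $I_j$ (so also their union and its complement), which forces the commutator $\phi(fbf^{-1}b^{-1})$ to be the identity on that complement. Since $f$ has compact support whereas $b$ is translation by $1$, the element $[f,b]$ is non-trivial in $\Gamma$, and faithfulness of $\phi$ then gives $\phi([f,b])\neq\id$ on at least one $I_j$.

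Next, fix such an index $j$. Both $BS(1,n)$ and $f$ map to homeomorphisms preserving $I_j$, so restriction gives a morphism $\phi_j\colon\langle BS(1,n),f\rangle\to\Homeo_+(I_j)$; conjugating by $\psi_j^{-1}$ yields a morphism
\[ \phi'\colon\langle BS(1,n),f\rangle\longrightarrow\Homeo_+(\R) \]
which is the identity on $BS(1,n)$ (by the defining property of $\psi_j$) and sends $f$ to $\widetilde{f}:=\psi_j^{-1}\phi(f)\psi_j$. Since $f$ has compact support, Proposition~\ref{prop:BSRecoMieux} applies to $\phi'$, giving that either $\widetilde{f}$ is a translation, or $\widetilde{f}(x)\in\{x,f(x)\}$ for every $x\in\R$. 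The translation alternative can be ruled out immediately: a translation commutes with $b$, so it would force $\phi'([f,b])=\id$, contradicting the choice of $I_j$.

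Therefore $\widetilde{f}(x)\in\{x,f(x)\}$ for all $x$. To conclude $\widetilde{f}=f$, I would use the simplifying assumption that $\{x:f(x)\neq x\}$ is a single interval $C$. Outside $C$ the two choices coincide and give $\widetilde{f}(x)=x$. Inside $C$, the two sets $A=\{x\in C:\widetilde{f}(x)=x\}$ and $B=\{x\in C:\widetilde{f}(x)=f(x)\}$ are closed in $C$, disjoint, and cover $C$; connectedness of $C$ forces one of them to be empty, and the case $B=\emptyset$ would make $\widetilde{f}$ the identity (a translation), already excluded. Hence $\widetilde{f}=f$ on all of $\R$, which is exactly the statement that $\psi_j$ conjugates $\phi|_{\langle BS(1,n),f\rangle}$ on $I_j$ to the standard action on~$\R$.

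The genuine work of the argument is hidden in the appeal to Proposition~\ref{prop:BSRecoMieux}; the main conceptual obstacle here is to notice that conjugation by~$\psi_j$ legitimately produces a morphism satisfying exactly the hypotheses of that proposition, even though $\phi_j$ restricted to $I_j$ need not be faithful and $\widetilde{f}$ need not have compact support in~$\R$. Once one observes that neither injectivity nor compact support of the target map is required by Proposition~\ref{prop:BSRecoMieux}, the rest is a short dichotomy argument.
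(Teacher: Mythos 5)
Your proof is correct and follows essentially the same route as the paper: restrict to an interval $I_j$ where $\phi([f,b])$ is nontrivial, conjugate by $\psi_j$ to get a (possibly non-injective) morphism fixing $BS(1,n)$, invoke Proposition~\ref{prop:BSRecoMieux}, rule out the translation alternative via $\phi([f,b])\neq\id$, and use connectedness of $\Supp(f)$ to upgrade the pointwise dichotomy to $\widetilde f=f$. You merely spell out in more detail the support computation for $\phi([f,b])$ and the clopen-partition argument that the paper compresses into its final two sentences.
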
 
  \begin{proof}
  Since $\phi$ is faithful, the commutator $[f,b]$ acts nontrivially and so
  $f$ acts nontrivially on at least one of the ($f$-invariant) intervals $I_j$
  in the complement of $b$.
  Identifying $I_j$ with $\R$ \textsl{via} $\psi_j$, we obtain a morphism from
  $\langle BS(1,n), f \rangle$ to $\langle BS(1,n),  \phi(f) \rangle$
  that is the identity on $BS(1,n)$ and sends $f$ to $\phi(f)$.
  By Proposition~\ref{prop:BSRecoMieux}, we conclude that, on any such interval,
  $\phi(f)$ either acts as a translation (which does not occur if
  $\phi([f,b]) \neq \id$ on $I_j$), or we have
  $\phi(f)(x) \in \{x, \psi_j f(x) \psi_j^{-1}\}$ for all $x \in I_j$.
  In this latter case, it follows that the interior of the support of
  $\phi(f)$ in $I_j$ is a union of connected components of the interior of
  the support of $\psi_j f\psi_j^{-1}$.
  But $\Supp(f)$ was assumed connected, so we have proved the lemma.
\end{proof}  

\begin{lemma}\label{lem:ConjGamma}
  For some $j$, the map $\psi_{j}$ conjugates the action of $\Gamma$ on $\R$
  to that of $\phi(\Gamma)$ on $I_j$.
\end{lemma}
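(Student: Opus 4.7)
The plan is to show that the map $\psi_j$ supplied by Lemma~\ref{lem:Conj1} --- which already conjugates $\langle BS(1,n), f\rangle$ to its $\phi$-image on $I_j$ --- actually extends to a conjugation of the entire group $\Gamma$. This amounts to proving (i) $\phi(\mu)(I_j) = I_j$; (ii) $\tilde\mu := \psi_j^{-1}\phi(\mu)\psi_j$ is the dilation $x\mapsto tx$ for some scalar $t$; and (iii) $t=\mu$.

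For (i), since $\mu$ commutes with $a$ in $\Gamma$, the diffeomorphism $\phi(\mu)$ commutes with $\phi(a)$ and hence permutes $\Fix(\phi(a))$ while preserving derivatives. The set $F_n$ of fixed points of $\phi(a)$ of derivative $n$ is finite (these are hyperbolic and $\phi(a) \in C^{1+\alpha}$), contains each $p_k$, and is totally ordered; orientation-preservation of $\phi(\mu)$ forces it to fix $F_n$ pointwise, so $\phi(\mu)(p_j) = p_j$. Next, because $\phi(a)$ is orientation-preserving and permutes the closed intervals $\overline{I_k}$ trivially (again by order-preservation), the endpoints $a_j, b_j$ lie in $\Fix(\phi(a))$; together with the facts that $\phi(a)^{-1}$ preserves $\Fix(\phi(b))$ and each $I_k$, this shows the basin of attraction of $p_j$ under $\phi(a)^{-1}$ is exactly $I_j$. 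Since $\phi(\mu)$ commutes with $\phi(a)^{-1}$ and fixes $p_j$, it preserves this basin, yielding $\phi(\mu)(I_j) = I_j$.

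For (ii), conjugating $\mu a\mu^{-1} = a$ through $\psi_j$ gives $\tilde\mu(nx) = n\tilde\mu(x)$ and $\tilde\mu(0) = 0$. Applying the order-preservation argument to $\phi(b_\mu)$ with $b_\mu := \mu b\mu^{-1}$ (which commutes with $\phi(b)$ and so preserves $\Fix(\phi(b))$) shows $\phi(b_\mu)$ preserves each $I_k$; therefore $\psi_j^{-1}\phi(b_\mu)\psi_j$ commutes with the standard action on $\R$ of the dense translation subgroup of $BS(1,n)$, so by Observation~\ref{obs:CentralTrans} it is itself a translation $\tau_t$. The function $p(x) := \tilde\mu(x) - tx$ is then $1$-periodic and satisfies $p(nx) = np(x)$ with $p(0)=0$; boundedness via periodicity combined with the scaling relation forces $p\equiv 0$, whence $\tilde\mu(x) = tx$. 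For (iii), I apply Proposition~\ref{prop:BSRecoMieux} to the restriction of $\psi_j^{-1}\phi\psi_j$ to $\langle BS(1,n), \mu f\mu^{-1}\rangle$: the image $tf(x/t)$ of $\mu f\mu^{-1}$ has nontrivial compact support and so cannot be a translation, so it must coincide on its support with $\mu f(x/\mu)$. Comparing supports yields $t\,\Supp(f) \subset \mu\,\Supp(f)$, and the symmetric argument applied to $\mu^{-1} f\mu$ gives the reverse inclusion, so $\Supp(f)$ is invariant under the dilation $t/\mu$; compactness of $\Supp(f)$ then forces $t=\mu$.

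The hardest step is (i): identifying the basin of attraction of $p_j$ under $\phi(a)^{-1}$ with $I_j$. Everything else is either an algebraic computation (the functional equation pinning down $\tilde\mu$) or a direct invocation of Proposition~\ref{prop:BSRecoMieux}, but step (i) is what rules out the a priori possibility that $\phi(\mu)$ shuffles the intervals $\{I_k\}$ or maps $I_j$ outside their union, and it is the geometric heart of the argument.
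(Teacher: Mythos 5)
Your route is genuinely different from the paper's: the paper chooses $j$ so that both $\phi([f,b])$ and $\phi([\mu f\mu^{-1},b])$ act nontrivially on $I_j$, and then pins down $\phi(\mu)$ \emph{pointwise} on $I_j$ by applying Proposition~\ref{prop:BSRecoMieux} to conjugates $\mu\gamma_k\mu^{-1}$ of elements $\gamma_k\in\langle BS(1,n),f\rangle$ with small support; you instead show $\phi(\mu)(I_j)=I_j$ dynamically and then identify $\tilde\mu=\psi_j^{-1}\phi(\mu)\psi_j$ through functional equations. Steps (i) and (ii) are essentially sound (granting orientations, see below), but the last step of (iii) has a genuine gap: from $t\,\Supp(f)\subset\mu\,\Supp(f)$ and the reverse inclusion you get that the compact set $\Supp(f)$ is invariant under the dilation by $t/\mu$, and this forces only $|t/\mu|=1$, not $t=\mu$. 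The case $t=-\mu$ survives exactly when $\{f\neq\id\}$ is an interval symmetric about $0$ and $f$ is odd on it, in which case the pointwise identity $tf(x/t)=\mu f(x/\mu)$ furnished by Proposition~\ref{prop:BSRecoMieux} is also satisfied; such $f$ are allowed by the standing hypotheses (compact, connected support), so the step as written fails for them. The gap is repairable with one more use of the relations: if $t=-\mu$ then $\psi_j^{-1}\phi(\mu b\mu^{-1})\psi_j=\tau_{-\mu}$, so the element $b^m(\mu b\mu^{-1})^l\in\Gamma_A$, which is the translation by $m+l\mu$, has $\psi_j$-conjugated $\phi$-image $\tau_{m-l\mu}$; choosing $m,l\in\Z$ with $|m+l\mu|$ larger than the diameter of $\Supp(f)$ but $m-l\mu$ nonzero and very small (possible since $\mu\notin\Q$), the element commutes with $f$ in $\Gamma$ while a nontrivial compactly supported map cannot commute with a small nontrivial translation --- a contradiction. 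Alternatively, the paper's pointwise argument avoids the sign issue altogether.

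A secondary issue: in (i) and (ii) you assert that $\phi(\mu)$, $\phi(a)$ and $\phi(\mu b\mu^{-1})$ preserve orientation, and this is precisely what lets you conclude that an order-preserving permutation of the finite set $F_n$, or of the finitely many intervals $I_k$, is trivial. The hypothesis is only that $\phi$ lands in $\Diff^{1+\alpha}([0,1])$, so orientation-reversing images are not excluded a priori (note Corollary~\ref{cor:BMNR} itself allows derivative $-n$ and orientation-reversing conjugacies). You should either justify orientation-preservation from the relations or explain how to run step (i) in the reversing case; as written, ``$\phi(\mu)$ fixes $F_n$ pointwise'' is not proved.
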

  
\begin{proof}
  Let $g = \mu f \mu^{-1}$.
  Applying Lemma \ref{lem:Conj1} to $g$ in place of $f$ shows that whenever
  $\phi([g,b])$ acts nontrivially on some $I_j$, then $\psi_j$ conjugates
  the action of $\langle BS(1,n), g \rangle$ on $I_j$ to the standard action.
  Since $\phi$ is faithful, there is some interval $I_j$ where $\phi([f,b])$
  and $\phi([g,b])$ are simultaneously nontrivial.
  On this interval, we will easily be able to show that $\psi_j$ conjugates
  $\mu$ to the standard action as well, and hence conjugates all of $\Gamma$.

  To see this, for any $x \in I_j$, take a nested sequence $U_{k, x}$ of
  intervals with $\bigcap_k U_{k,x} = \{x\}$.
  Since the action of $\langle BS(1,n), f \rangle$ on $I_j$ has small supports
  everywhere, we may take $\gamma_k \in \langle BS(1,n), f \rangle$ with
  $\phi(\gamma_k)$ supported on $U_{k, x}$.
  Thus, $\gamma_k$ is supported on $\psi_j^{-1}(U_{k, x}$, and its conjugate
  by $\mu$ is supported on $\mu (\psi_j^{-1}U_{k, x})$.
  Applying Proposition~\ref{prop:BSRecoMieux} to
  $\langle BS(1,n), \mu \gamma_k \mu^{-1} \rangle$, it follows that
  $\phi(\mu \gamma_k \mu^{-1})$ is supported on
  $\psi_j (\mu) \psi_j^{-1} (U_{k, x})$.
  We conclude that, $\phi(\mu)(x) = \psi_j \mu \psi_j^{-1}(x)$, as desired.
  \end{proof} 

 \smallskip
 \noindent {\bf Conclusion of proof.} 
 It remains only to remark that, by Lemma~\ref{prop:Takens}, the map 
 $\psi_{j}$ obtained from Lemma~\ref{lem:ConjGamma} is of class $C^{1+\alpha}$.
 \end{proof}


\bibliographystyle{plain}

\bibliography{biblio_new}

\end{document}